\newtheorem{thm}{Theorem}[section]
\newtheorem{cor}[thm]{Corollary}
\newtheorem{lem}[thm]{Lemma}
\newtheorem{ass}[thm]{Assumption}
\newtheorem{rem}{Remark}
\theoremstyle{definition}
\numberwithin{equation}{section}
\begin{document}

\begin{frontmatter}



\title{The truncated $\theta$-Milstein method for nonautonomous and highly nonlinear stochastic differential delay equations}

\author[label1]{Shuaibin Gao}
\author[label2]{Junhao Hu}
\author[label1]{Jie He}
\author[label1]{Qian Guo\corref{cor1}}\ead{qguo@shnu.edu.cn}

\cortext[cor1]{Corresponding author at: Department of Mathematics, Shanghai Normal University, Shanghai 200234, China.}
\address[label1]{Department of Mathematics, Shanghai Normal University, Shanghai 200234, China}
\address[label2]{School of Mathematics and Statistics, South-Central University For Nationalities,
Wuhan 430074, China}

\begin{abstract}
This paper focuses on the strong convergence of the truncated $\theta$-Milstein method for a class of nonautonomous stochastic differential delay equations whose drift and diffusion coefficients can grow polynomially. The convergence rate, which is close to one, is given under the weaker assumption than the monotone condition. To verify our theoretical findings, we present a numerical example.
\end{abstract}

\begin{keyword}
The truncated $\theta$-Milstein method; Stochastic differential delay equations; Strong convergence rate; Highly nonlinear
\end{keyword}

\end{frontmatter}



\section{Introduction}

The research on stochastic differential equations (SDEs) has been widely concerned due to their applications in numerous fields such as finance, communication, biology, chemistry, and ecology \cite{1,4,27,31}.
It is well known that time delay is widespread in nature and occurs in dynamics with a finite propagation time, then the corresponding stochastic differential delay equations (SDDEs) are widely applied  in stochastic systems \cite{6,24,26,28,35,36,37,44}.
In plenty of instances, the true solutions to SDEs cannot be expressed explicitly. Hence, it is very meaningful to simulate the true solutions with different numerical algorithms; in this way, scholars can grasp a lot of significant properties of the  true solutions without knowing the explicit form of the true solutions.
One of the most famous numerical methods for a SDE is Euler-Maruyama (EM) method, which has been investigated and developed in the past few decades \cite{20,27,31}.
Unfortunately, Hutzenthaler et al. have proved that the $p$th($p\geq 1$) moment of the EM solutions would diverge to infinity if the coefficients grow super-linearly in \cite{18}.
To approximate the stochastic equations with highly nonlinear growing coefficients, many implicit methods have been studied \cite{3,14,32,33,37,43}.
Furthermore, some modified explict schemes have been established as well, since they have less computational cost \cite{5,19,34,38}. 
Particularly, it is worth emphasizing that the truncated EM method for SDEs was initially established by Mao in \cite{29} and the convergence rate was obtained in \cite{30}. 
Since then, some scholars have studied SDEs whose coefficients can grow super-linearly by using the truncated EM method, and we can refer to \cite{8,9,10,11,13,22,25} and references therein.
To improve the convergence rate, Guo et al. \cite{12} established the truncated Milstein method (TMM) to approximate SDEs with commutative noise. 
Thereafter, the  TMM for nonautonomous  SDEs was analyzed in \cite{23}.
The convergence rate of the  TMM for SDDEs was investigated in \cite{42}.
As for other papers about Milstein methods, we refer the readers to \cite{7,16,17,21,33,38,39,45} for more detailed discussions.

The aim of this paper is to study the strong convergence rate of the truncated $\theta$-Milstein method for nonautonomous and highly nonlinear SDDEs.
The main contributions of this paper are as follows.
\begin{itemize}
	\item The SDDEs in this paper are nonautonomous, unlike many studies that focus on autonomous case. It is worth noting that the autonomy of the equation will affect the convergence rate of the numerical scheme.
	\item The truncated $\theta$-Milstein method for nonautonomous SDDEs is established in this paper, and it will degenerate into the TMM when $\theta=0$.
	\item The convergence rate in $\mathcal{L}^{2}$ sense can be obtained with the more relaxed assumptions than those  in \cite{42}, which is given in Remark \ref{remar1}. 
	\item We relax the requirements for establishing the numerical scheme, which is shown in Remark \ref{remaa6}.  
	\item The convergence rate in $\mathcal{L}^{\bar{q}}(\bar{q}\geq2)$ sense is presented in this paper, but \cite{42} only gives the convergence rate in $\mathcal{L}^{2}$ sense, which can be found in Theorems \ref{theo37} and \ref{theo311}.
\end{itemize}

This paper is organized as follows. Some necessary notations and the structure of the  truncated $\theta$-Milstein method are shown in Section 2. Main result are presented in Section 3. Section 4 contains an example. The conclusion and discussion on future research are stated in Section 5.

\section{Preliminaries}

We use the following notations throughout this paper.
Let $|x|$ denote its Euclidean norm for $x\in \mathbb{R}^n$.
Define $a\wedge b = \text {min}\{a, b\}$ and $a\vee b=\text{max}\{a,b\}$ for any real numbers $a, b$.
Let $\left \lfloor a \right \rfloor$ be the largest integer which is not greater than $a$.
Let the delay constant $\tau>0$.
Then let $\mathcal{C}([-\tau,0];\mathbb{R})$ be the family of all continuous functions $\xi$ from $[-\tau,0] $ to $\mathbb{R}$ with the norm $\|\xi\|=\text{sup}_{-\tau \le \theta \le 0}|\xi (\theta)|$.
If $S$ is a set, let $\mathbb{I}_S$ be its indicator function, which means that $\mathbb{I}_S(x)=1$ if $x \in S$ and $\mathbb{I}_S(x)=0$ if $x \notin S$.  Set $\mathbb{R} _+ = [0,+\infty)$.
Suppose that $C$ is a positive real constant which may be different in different cases.

Let $\big ( \Omega, \mathcal{F}, \{\mathcal{F}_t\}_{t\ge 0}, \mathbb{P} \big )$ stand for a complete probability space with a filtration  $\{\mathcal{F}_t\}_{t\ge 0}$ which satisfies the usual conditions (i.e., it is increasing and right continuous while $ \mathcal{F}_0$ contains all $\mathbb{P}$-null sets).
Denote by $\mathbb{E}$  the probability expectation.
Let $\mathcal{L}^p =\mathcal{L}^p( \Omega, \mathcal{F},  \mathbb{P} \big )$ for $p >0$ be the space of all random variables $X$ with $\mathbb{E}|X|^p<\infty$. Let $\mathcal{C}_{\mathcal{F}_0}^b ([-\tau,0];\mathbb{R})$ stand for the family of $\mathcal{F}_0$-measurable, $\mathcal{C}([-\tau,0];\mathbb{R})$-valued, bounded random variables.
Let $B(t) $ stand for a $1$-dimensional Brownian motion.

Consider the 1-dimensional nonautonomous and highly nonlinear SDDEs of the form
\begin{equation}\label{sdde1}
	d x(t)=f\left(t, x(t), x(t-\tau)\right) d t+g\left(t, x(t), x(t-\tau)\right) dB(t),
\end{equation}
on $t\geq 0$ with the initial data
\begin{equation}
x_{0}=\xi=\{ \xi ( \theta ) : - \tau \leq \theta \leq 0 \}\in \mathcal{C}^{b}_{\mathcal{F}_{0}}([ - \tau , 0 ];\mathbb{R}),
\end{equation}
where
$f : \mathbb{R_{+}}\times \mathbb{R}\times \mathbb{R} \rightarrow \mathbb{R}$ and  $g : \mathbb{R_{+}} \times \mathbb{R}\times \mathbb{R} \rightarrow \mathbb{R}$.

In order to study the strong convergence
rate of the truncated $\theta$-Milstein method for nonautonomous and highly nonlinear SDDEs (\ref{sdde1}), the following assumptions need to be imposed. 

\begin{ass}\label{a31}
	 There exist constants $\bar{K}>0$ and $\beta \geq 0$ such that
\begin{equation*}
	\begin{split}
	&|f(t,a,b)-f(t,\bar{a},\bar{b})|\vee|g(t,a,b)-g(t,\bar{a},\bar{b})|\\ 
	&\leq \bar{K}(1+|a|^{\beta}+|b|^{\beta}+|\bar{a}|^{\beta}+|\bar{b}|^{\beta})(|a-\bar{a}|+|b-\bar{b}|)
	\end{split}
\end{equation*}
for any $t\in [0,T]$ and $a,b,\bar{a},\bar{b}\in \mathbb{R}$.
\end{ass}

Before Assumption \ref{a32}, we give more notations. Let $\bar{\mathcal{U}}$ be the family of all continuous functions $U:\mathbb{R} \times \mathbb{R}\rightarrow \mathbb{R}_+$ such that for $\varsigma>0$, there is a constant $\rho_\varsigma >0$, which satisfies
\begin{equation}\label{zs1}
	U(m,n)\leq \rho_\varsigma|m-n|^2
\end{equation}
for any $m,n\in \mathbb{R}  $ with $|m|\vee|n|\leq \varsigma$. \\

\begin{ass}\label{a32}
	 There exist constants $K_{1}>0$ and $q>2$ such that
\begin{equation*}
\begin{split}
	 &(a-\bar{a})^{T}(f(t,a,b)-f(t,\bar{a},\bar{b}))+(q-1)|g(t,a,b)-g(t,\bar{a},\bar{b})|^{2}\\
	 &\leq K_{1}(|a-\bar{a}|^{2}+|b-\bar{b}|^{2})-U(a,\bar{a})+U(b,\bar{b})
	\end{split}
\end{equation*}
for any $t\in [0,T]$ and $a,b,\bar{a},\bar{b}\in \mathbb{R}$.
\end{ass}

\begin{rem}\label{remar1}
	An example is given to show that Assumption \ref{a32} is weaker than (A3) in \cite{42} by introducing $U(\cdot,\cdot)$. Let  $f(a,b)=\frac{1}{8}|b|^{\frac{5}{4}}-5a^{3}+2\zeta_{t}a$, $g(a,b)=\frac{1}{2}|a|^{\frac{3}{2}} +\zeta_{t}b$, $\zeta_{t}=[t(1-t)]^{\frac{3}{4}}$ for $t\in[0,1]$ and $a,b\in \mathbb{R}$. One can see that there is no $K_1$ which satisfies
	\begin{equation*}
		\begin{split}
			&(a-\bar{a})^{T}(f(t,a,b)-f(t,\bar{a},\bar{b}))+(q-1)|g(t,a,b)-g(t,\bar{a},\bar{b})|^{2}\\
			&\leq K_{1}(|a-\bar{a}|^{2}+|b-\bar{b}|^{2}),
		\end{split}
	\end{equation*}
however, Assumption \ref{a32} holds. The detailed proof can be found in Section 4.
\end{rem}

\begin{ass}\label{a33}
	There exist constants $K_{2}>0$ and $p>q$ such that
\begin{equation*}
	 a^{T}f(t,a,b)+(p-1)|g(t,a,b)|^{2}\leq K_{2}(1+|a|^{2}+|b|^{2})
\end{equation*}
for any $t\in [0,T]$ and $a,b\in \mathbb{R}$.
\end{ass}
	
\begin{ass}\label{a34}
	 There exist constants $K_{3}>0$ , $\beta \geq 0$ and $\sigma\in (0,1]$ such that
\begin{equation*}
	|f(t_{1},a,b)-f(t_{2},a,b)|\vee|g(t_{1},a,b)-g(t_{2},a,b)|\leq K_{3}(1+|a|^{\beta+1}+|b|^{\beta+1})|t_{1}-t_{2}|^{\sigma}
\end{equation*}
for any $t_{1}, t_{2}\in [0,T]$ and $a,b\in \mathbb{R}$.
\end{ass}
	
\begin{ass} \label{a35}
	There exist constants $K_{4}>0$ and $\gamma\in (0,1]$ such that
\begin{equation*}
	|\xi(t)-\xi(s)|\leq K_{4}|t-s|^{\gamma} 
\end{equation*}
for all $ t,s \in [-\tau,0]$.
\end{ass}
	
\begin{ass}\label{a36}
	 There exist constants $K_{5}>0$ and $\beta \geq 0$ such that
\begin{equation*}
	\begin{split}
		&|f_{i}(t,a,b)|\vee|g_{i}(t,a,b)|\vee|f_{ij}(t,a,b)|\vee|g_{ij}(t,a,b)|\leq K_{5}(1+|a|^{\beta+1}+|b|^{\beta+1}),
	\end{split}
\end{equation*}
where
\begin{equation}\label{notat12}
	\begin{split}
		&f_{i}(t,a_1,a_2)=\frac{\partial f(t,a_1,a_2)}{\partial a_i}, ~~~g_{i}(t,a_1,a_2)=\frac{\partial g(t,a_1,a_2)}{\partial a_i},\\
		&f_{ij}(t,a_1,a_2)=\frac{\partial ^2 f(t,a_1,a_2)}{\partial a_i\partial a_j}, ~~~g_{ij}(t,a_1,a_2)=\frac{\partial^2 g(t,a_1,a_2)}{\partial a_i \partial a_j},\\
	\end{split}
\end{equation}
for any $t \in [0,T],a,b,a_1,a_2\in \mathbb{R}$ and $i, j=1, 2$.
\end{ass}
The notations in (\ref{notat12}) will be used in the rest of this paper. The following lemma
 can be obtained with the standard method.

\begin{lem}\label{le1}
	 Let Assumptions \ref{a31}-\ref{a33} hold. 
	 There  exists a unique global solution $x(t)$ to SDDE (\ref{sdde1}), which satisfies
\begin{equation*}
	\sup _{0 \leq t \leq T} \mathbb{E}|x(t)|^{p} \leq C,~~~ \forall T \geq 0.
\end{equation*}
\end{lem}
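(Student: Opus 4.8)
The plan is to establish existence, uniqueness, and the uniform $p$th moment bound by a combination of standard truncation (localization) arguments and a Lyapunov-type estimate driven by Assumption \ref{a33}. First I would observe that Assumptions \ref{a31} and \ref{a34} together give local Lipschitz continuity of $f$ and $g$ in $(a,b)$ on every compact set, uniformly in $t\in[0,T]$. Hence, on each delay interval $[0,\tau],[\tau,2\tau],\dots$, the equation can be treated as an SDE with a known (already constructed) forcing term $x(t-\tau)$; by the classical theory of SDEs with locally Lipschitz coefficients (e.g. Mao, \emph{Stochastic Differential Equations and Applications}) there exists a unique maximal local solution up to an explosion time $\rho_e$. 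The step method then extends this to a unique local solution on $[0,T]$, and it remains to rule out explosion by proving the moment bound, which by a standard Fatou/monotone-convergence argument on the stopping times $\rho_k=\inf\{t\ge0:|x(t)|\ge k\}$ reduces to obtaining an a priori bound on $\mathbb{E}|x(t\wedge\rho_k)|^p$ that is uniform in $k$.

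For the a priori bound I would apply It\^o's formula to $|x(t)|^p$ (or, to be safe near the origin, to $(1+|x(t)|^2)^{p/2}$), which produces the drift term $p|x|^{p-2}\big(x^T f(t,x,x(t-\tau))+\tfrac{p-1}{2}|g(t,x,x(t-\tau))|^2\big)$ plus a martingale term that vanishes in expectation after stopping at $\rho_k$. Assumption \ref{a33} bounds the bracket by $p|x|^{p-2}K_2(1+|x|^2+|x(t-\tau)|^2)$, and then Young's inequality converts $|x|^{p-2}(1+|x(t-\tau)|^2)$ into a constant times $1+|x|^p+|x(t-\tau)|^p$. Taking expectations and using that the delayed term over $[0,t]$ is controlled by $\sup_{-\tau\le s\le t}\mathbb{E}|x(s)|^p$ together with the boundedness of the initial data $\xi\in\mathcal{C}^b_{\mathcal{F}_0}$, I obtain an integral inequality of the form
\begin{equation*}
\mathbb{E}|x(t\wedge\rho_k)|^p \le C_T + C\int_0^t \sup_{0\le r\le s}\mathbb{E}|x(r\wedge\rho_k)|^p\,ds,
\end{equation*}
to which Gronwall's inequality applies, yielding a bound independent of $k$. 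Letting $k\to\infty$ via Fatou's lemma gives $\sup_{0\le t\le T}\mathbb{E}|x(t)|^p\le C$, which simultaneously shows $\rho_e=\infty$ a.s. and hence that the local solution is in fact global.

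The main obstacle, and the only place requiring care, is the handling of the delay term in the moment estimate: because $x(t-\tau)$ appears with a superlinear-in-principle coefficient controlled only in the combined form of Assumption \ref{a33}, one must set up the Gronwall argument on the running supremum $\sup_{0\le r\le s}\mathbb{E}|x(r\wedge\rho_k)|^p$ rather than on $\mathbb{E}|x(s\wedge\rho_k)|^p$ pointwise, and split the delayed integral into the contribution from $[-\tau,0]$ (bounded by $\|\xi\|$) and from $[0,t]$. A secondary technical point is justifying the application of It\^o's formula before we know the solution has finite moments — this is circumvented precisely by working with the stopped process $x(t\wedge\rho_k)$, for which all the terms are bounded. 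Since Assumptions \ref{a31}--\ref{a33} are exactly the hypotheses under which such estimates are routine in the SDDE literature, the lemma indeed "can be obtained with the standard method," and I would only sketch these steps rather than reproduce the full computation.
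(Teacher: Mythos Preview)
Your proposal is correct and is precisely the ``standard method'' the paper alludes to; the paper gives no proof of this lemma beyond that remark, so there is nothing further to compare. One small slip: you invoke Assumption~\ref{a34} for local Lipschitz continuity, but the lemma only hypothesizes Assumptions~\ref{a31}--\ref{a33}, and in fact Assumption~\ref{a31} alone already yields local Lipschitz continuity of $f,g$ in $(a,b)$ uniformly in $t\in[0,T]$, so \ref{a34} is neither available nor needed here.
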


In order to define the truncated $\theta$-Milstein method, choose a  continuous and strictly increasing function
$\Lambda:\mathbb{R} _+ \rightarrow \mathbb{R}_+$ such that $\Lambda (w) \to \infty$ when $w \rightarrow \infty$, and
\begin{equation}
	\sup_{0\leq t \leq T} \sup_{|x|\vee|y| \le w}\left ( |f(t,x,y)| \vee |g(t,x,y)| \vee |g_1(t,x,y)|\vee |g_2(t,x,y)|\right ) \le \Lambda (w),~~~\forall w\geq 1.
\end{equation}

Denote by $\Lambda ^ {-1}$ the inverse function of $\Lambda$. Therefore, $\Lambda ^ {-1}$ is a strictly increasing continuous function from $[\Lambda (1),\infty)$ to $[1,\infty)$. Next, choose $K_0 \geq [1\vee \Lambda(1)]$ and a strictly decreasing function
 $\alpha: ( 0 , 1 ] \rightarrow ( 0 , \infty )$  such that
\begin{equation}\label{eq001}
	\lim _ { \Delta \rightarrow 0 } \alpha ( \Delta ) = \infty,~~~\Delta ^{\frac{1}{4}} \alpha  (\Delta) \leq K_0,~~~ \forall \Delta\in (0,1].
\end{equation}
\begin{rem}\label{remaa6}
The condition (\ref{eq001}) is weaker than that in \cite{42}. Here, let $K_0=1$, then $\Delta \in (0,\frac{1}{\alpha  (\Delta)^4}]$. In \cite{42}, $\Delta \in (0,\frac{1}{\alpha  (\Delta)^6}]$. Obviously, the step size $\Delta$ in our paper has a wider range than that in \cite{42}, which will lead to less computation, so we have more advantages in algorithm than \cite{42}. As for other details,  please refer to Remark 2.4 in  \cite{15}.
\end{rem}
The truncated mapping $\pi_{\Delta}: \mathbb{R} \rightarrow \mathbb{R}$ for the given step size $\Delta\in (0,1]$ is defined as
\begin{equation}
	\pi_{\Delta}(\chi)=( | \chi |  \wedge \Lambda ^ {-1 }( \alpha( \Delta ) ) ) \frac { \chi } { | \chi | },
\end{equation}
where let $\chi / | \chi | = 0 $ when $ \chi = 0$.
One can see that $\pi_{\Delta}$ can map $\chi$ to itself when $| \chi |\leq \Lambda ^ {-1 }( \alpha( \Delta ) )$, and to $\Lambda ^ {-1 }( \alpha( \Delta ) )$ when $| \chi |> \Lambda ^ {-1 }( \alpha( \Delta ) )$.
Then define 
$$f_{\Delta}(t,x,y)=f(t,\pi_{\Delta}(x),\pi_{\Delta}(y)),~~~g_{\Delta}(t,x,y)=g(t,\pi_{\Delta}(x),\pi_{\Delta}(y)),$$
$$g_{1,\Delta}(t,x,y)=g_{1}(t,\pi_{\Delta}(x),\pi_{\Delta}(y)),~~~g_{2,\Delta}(t,x,y)=g_{2}(t,\pi_{\Delta}(x),\pi_{\Delta}(y)).$$

By the definition, one has that
\begin{equation}\label{eq30}
	|f_{\Delta}(t,x,y)|\vee|g_{\Delta}(t,x,y)|\vee|g_{1,\Delta}(t,x,y)|\vee|g_{2,\Delta}(t,x,y)|\leq \alpha( \Delta ).
\end{equation}

\begin{lem}\label{le2}
	By Assumption \ref{a33}, for any $\Delta\in(0,1]$, we have
	\begin{equation*}
		x^{T} f_{\Delta}(t,x,y)+(p-1)|g_{\Delta}(t,x,y)|^{2}\leq \bar{K}_{2}(1+|x|^{2}+|y|^{2})
	\end{equation*}
	for any $t\in [0,T]$ and $x,y\in \mathbb{R}$.
\end{lem}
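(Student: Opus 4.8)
The plan is to reduce the inequality to Assumption \ref{a33} evaluated at the truncated arguments, and then to absorb the error produced by the truncation using the positive homogeneity of $\pi_\Delta$. Write $u=\pi_\Delta(x)$ and $v=\pi_\Delta(y)$. By the definition of $\pi_\Delta$ one has $|u|\le|x|$, $|v|\le|y|$, and, for $x\neq0$, $u=\lambda x$ with $\lambda=\big(|x|\wedge\Lambda^{-1}(\alpha(\Delta))\big)/|x|\in(0,1]$; the case $x=0$ is immediate, since then $x^{T}f_{\Delta}(t,x,y)=0$ and Assumption \ref{a33} applied at $(0,v)$ gives $(p-1)|g_{\Delta}(t,x,y)|^{2}=(p-1)|g(t,0,v)|^{2}\le K_{2}(1+|v|^{2})\le K_{2}(1+|x|^{2}+|y|^{2})$. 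Applying Assumption \ref{a33} at $(u,v)$ yields the base estimate
\[
u^{T}f(t,u,v)+(p-1)|g(t,u,v)|^{2}\le K_{2}(1+|u|^{2}+|v|^{2})\le K_{2}(1+|x|^{2}+|y|^{2}),
\]
and since $f_{\Delta}(t,x,y)=f(t,u,v)$ and $g_{\Delta}(t,x,y)=g(t,u,v)$, the task is to pass from $u^{T}f(t,u,v)$ to $x^{T}f_{\Delta}(t,x,y)=\lambda^{-1}u^{T}f(t,u,v)$.

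I would then split into cases according to the sign of $u^{T}f(t,u,v)$. If $u^{T}f(t,u,v)\le0$, then $\lambda^{-1}\ge1$ gives $x^{T}f_{\Delta}(t,x,y)=\lambda^{-1}u^{T}f(t,u,v)\le u^{T}f(t,u,v)$, and the claim follows directly from the base estimate. If $u^{T}f(t,u,v)>0$ and no truncation of $x$ occurs (i.e.\ $\lambda=1$), the claim is again immediate from the base estimate. The only genuine case is $u^{T}f(t,u,v)>0$ together with truncation of $x$: then $|u|=\Lambda^{-1}(\alpha(\Delta))$, so $1\le|u|<|x|$ (the lower bound because $\Lambda^{-1}$ takes values in $[1,\infty)$, the upper because truncation means $|x|>\Lambda^{-1}(\alpha(\Delta))$), and also $|v|\le\Lambda^{-1}(\alpha(\Delta))=|u|$. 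Hence $1+|u|^{2}+|v|^{2}\le3|u|^{2}$ and $\lambda^{-1}=|x|/|u|$, so
\[
x^{T}f_{\Delta}(t,x,y)=\frac{|x|}{|u|}\,u^{T}f(t,u,v)\le\frac{|x|}{|u|}\,K_{2}(1+|u|^{2}+|v|^{2})\le 3K_{2}\,|x|\,|u|\le 3K_{2}|x|^{2}.
\]
Since $u^{T}f(t,u,v)>0$, the base estimate also gives $(p-1)|g_{\Delta}(t,x,y)|^{2}=(p-1)|g(t,u,v)|^{2}\le K_{2}(1+|x|^{2}+|y|^{2})$, and adding the two bounds yields $x^{T}f_{\Delta}(t,x,y)+(p-1)|g_{\Delta}(t,x,y)|^{2}\le 4K_{2}(1+|x|^{2}+|y|^{2})$.

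Collecting all cases, the conclusion holds with $\bar K_{2}=4K_{2}$. The only step requiring care is the case $u^{T}f(t,u,v)>0$ with $x$ truncated: there the amplification factor $\lambda^{-1}=|x|/\Lambda^{-1}(\alpha(\Delta))$ can be arbitrarily large, and the point is that it is exactly compensated by the quadratic term $|u|^{2}$ on the right-hand side of Assumption \ref{a33}, because truncation forces $|u|=\Lambda^{-1}(\alpha(\Delta))<|x|$ and $|v|\le|u|$, so that $\lambda^{-1}(1+|u|^{2}+|v|^{2})$ is bounded by a constant multiple of $|x|\,|u|\le|x|^{2}$. Everything else is just bookkeeping of constants, which is why the result is routine.
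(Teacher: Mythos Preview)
Your proof is correct. The paper does not actually supply its own argument for this lemma; it simply states that ``Lemma~\ref{le2} can be proved with the technique in~\cite{41}, so we omit the proof process.'' Your approach is exactly the standard one used in that reference (and in Mao's original truncated Euler--Maruyama papers): apply Assumption~\ref{a33} at the truncated point $(u,v)=(\pi_\Delta(x),\pi_\Delta(y))$, and in the only nontrivial situation---when $|x|>\Lambda^{-1}(\alpha(\Delta))$ and $u^{T}f(t,u,v)>0$---absorb the amplification factor $|x|/|u|$ by using $|u|=\Lambda^{-1}(\alpha(\Delta))\geq1$ and $|v|\leq|u|$, so that $\lambda^{-1}K_{2}(1+|u|^{2}+|v|^{2})\leq 3K_{2}|x|\,|u|\leq 3K_{2}|x|^{2}$. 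Your explicit case split on the sign of $u^{T}f(t,u,v)$ is a clean way to isolate this one genuine case, and the resulting constant $\bar K_{2}=4K_{2}$ is consistent with what the standard argument gives.
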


Lemma \ref{le2} can be proved with the technique in \cite{41}, so we omit the proof process.

Let us now define the truncated $\theta$-Milstein method to approximate the solution to (\ref{sdde1}).
Suppose that  $\Delta=\frac{\tau}{M}=\frac{T}{M'}$ holds with integers $M$ and $M'$. Without losing generality, set $M<M'$.
Let $t_{k}=k\Delta$ for $k=-M,\ldots, 0 ,\ldots, M'$.
Then define

\begin{align*}
	\left \{\begin{array}{l}
		Y_{\Delta}(t_{k})=\xi(t_{k}),~~~~~~~~~~~~~~~~~~~~~~~~~~~~~~~~~~~~~~~~~~k=-M, -M+1,\ldots, 0;\\
		Y_{\Delta}(t_{k+1})=Y_{\Delta}(t_{k})+\theta f_{\Delta}(t_{k+1},Y_{\Delta} (t_{k+1}),Y_{\Delta}(t_{k+1-M}))\Delta\\
		~~~~~~+(1-\theta)f_{\Delta}(t_{k},Y_{\Delta}(t_{k}),Y_{\Delta}(t_{k-M}))\Delta
		+g_{\Delta}(t_{k},Y_{\Delta}(t_{k}),Y_{\Delta}(t_{k-M}))\Delta B_{k}\\ ~~~~~~+g_{1,\Delta}(t_{k},Y_{\Delta}(t_{k}),Y_{\Delta}(t_{k-M}))g_{\Delta}(t_{k},Y_{\Delta}(t_{k}),Y_{\Delta}(t_{k-M}))Q_{1},\\
		~~~~~~~~~~~~~~~~~~~~~~~~~~~~~~~~~~~~~~~~~~~~~~~~~~~~~~~~~~~~~~~~~~~~~k= 0, 1, \ldots, M-1;\\
	Y_{\Delta}(t_{k+1})=Y_{\Delta}(t_{k})+\theta f_{\Delta}(t_{k+1},Y_{\Delta} (t_{k+1}),Y_{\Delta}(t_{k+1-M}))\Delta\\
	~~~~~~+(1-\theta)f_{\Delta}(t_{k},Y_{\Delta}(t_{k}),Y_{\Delta}(t_{k-M}))\Delta
	+g_{\Delta}(t_{k},Y_{\Delta}(t_{k}),Y_{\Delta}(t_{k-M}))\Delta B_{k}\\ ~~~~~~+g_{1,\Delta}(t_{k},Y_{\Delta}(t_{k}),Y_{\Delta}(t_{k-M}))g_{\Delta}(t_{k},Y_{\Delta}(t_{k}),Y_{\Delta}(t_{k-M}))Q_{1}\\
	~~~~~~+g_{2,\Delta}(t_{k},Y_{\Delta}(t_{k}),Y_{\Delta}(t_{k-M}))g_{\Delta}(t_{k-M},Y_{\Delta}(t_{k-M}),Y_{\Delta}(t_{k-2M}))Q_{2},\\
	~~~~~~~~~~~~~~~~~~~~~~~~~~~~~~~~~~~~~~~~~~~~~~~~~~~~~~~~~~~~~k=M, M+1,\ldots, M'-1.
	\end{array}\right .
\end{align*}
where
\begin{equation*}
	\begin{split}
		Q_{1}=\int_{t_{k}}^{t_{k+1}}\int_{t_{k}}^{s}dB(t)dB(s)=\frac{(\Delta B_{k})^{2}-\Delta}{2},
	\end{split}	
\end{equation*}
		\begin{equation*}
			\begin{split}
		Q_{2}=\int_{t_{k}}^{t_{k+1}}\int_{t_{k}}^{s}dB(t-\tau)dB(s),~~~\Delta B_{k}=B(t_{k+1})-B(t_{k}).
	\end{split}	
\end{equation*}

There is no doubt that we only need to analyze the case when $T>\tau$, which is equivalent to $M'>M$. 
Define
\begin{equation}
	\mu(t)=\sum_{k=0}^{M'-1} t_{k} \mathbb{I}_{\left[t_{k}, t_{k+1}\right)}(t),
\end{equation}
and
\begin{equation}
	\bar{\eta}_{\Delta}(t)=\sum_{k=-M}^{M'-1} Y_{\Delta}\left(t_{k}\right) \mathbb{I}_{\left[t_{k}, t_{k+1}\right)}(t).
\end{equation}

The continuous form of the truncated $\theta$-Milstein method can be defined as
\begin{equation}\label{eq21}
	\begin{split}
		&\eta_{\Delta}(t)-\theta f_{\Delta}(t,\eta_{\Delta}(t),\eta_{\Delta}(t-\tau))\Delta\\
		&=\xi(0)-\theta f_{\Delta}(0,\xi(0),\xi(-\tau))\Delta
		+\int_{0}^{t}f_{\Delta}(\mu(s),\bar{\eta}_{\Delta}(s),\bar{\eta}_{\Delta}(s-\tau))ds
		\\&~~~+\int_{0}^{t}g_{\Delta}(\mu(s),\bar{\eta}_{\Delta}(s),\bar{\eta}_{\Delta}(s-\tau))dB(s)
		\\&~~~+\int_{0}^{t}g_{1,\Delta}(\mu(s),\bar{\eta}_{\Delta}(s),\bar{\eta}_{\Delta}(s-\tau))
		g_{\Delta}(\mu(s),\bar{\eta}_{\Delta}(s),\bar{\eta}_{\Delta}(s-\tau))\Delta \hat{B}(s)dB(s)
		\\&~~~+\int_{0}^{t}g_{2,\Delta}(\mu(s),\bar{\eta}_{\Delta}(s),\bar{\eta}_{\Delta}(s-\tau))\\
		&~~~~~~\cdot g_{\Delta}(\mu(s-\tau),\bar{\eta}_{\Delta}(s-\tau),\bar{\eta}_{\Delta}(s-2\tau))\Delta \hat{B}(s-\tau)dB(s),
	\end{split}
\end{equation}
 where $\Delta \hat{B}(t)=\sum_{k=-M}^{M'-1}\left(B(t)-B\left(t_{k}\right)\right) \mathbb{I}_{\left[ t_{k} , t_{k+1}\right)}(t)$.

By the monotone operator theory in \cite{40}, there exists a unique $Y_\Delta (t_{k+1})$ for the given $Y_\Delta (t_{k})$ when $K_1 \theta \Delta <1$ holds. Let $\Delta^*=1\wedge \frac{1}{K_1 \theta}$. 
In the rest of this paper, let $\Delta\in \left(0, \Delta^{*}\right)$ and $\theta\in (0,1]$.
To simplify the notations, set $\kappa(t)=\lfloor t/\Delta\rfloor \Delta$ for $t \in [-\tau, T]$. Additionally, define
\begin{equation}\label{eq22}
	Z_{\Delta}(t)=\eta_{\Delta}(t)-\theta f_{\Delta}(t,\eta_{\Delta}(t),\eta_{\Delta}(t-\tau))\Delta,
\end{equation}
\begin{equation}\label{eq23}
	\bar{Z}_{\Delta}(t)=\bar{\eta}_{\Delta}(t)-\theta f_{\Delta}(\mu(t),\bar{\eta}_{\Delta}(t),\bar{\eta}_{\Delta}(t-\tau))\Delta.
\end{equation}

The following Taylor expansion would play a significant role in our proof. For more details, please refer to \cite{2}.

If $\psi:\mathbb{R}^{3} \rightarrow \mathbb{R}$ is a third-order continuously differentiable function, one can see that
\begin{equation}
	\psi(\bar{u})-\psi(\hat{u})=\psi'(u)\mid _{u=\hat{u}}(\bar{u}-\hat{u})+R_{\psi}(\bar{u}, \hat{u}),
\end{equation}
where $$R_{\psi}(\bar{u}, \hat{u})=\int_{0}^{1}(1-\vartheta)\psi''(u)\mid _{u=\hat{u}+\vartheta(\bar{u}-\hat{u})}(\bar{u}-\hat{u}, \bar{u}-\hat{u})d\vartheta,$$
for any $u, \bar{u},\hat{u}\in \mathbb{R}^{3}$. Here, $\psi'$ and $\psi''$ are defined as
\begin{equation}
	\psi'(u)(j)=\sum_{i=1}^{3}\frac{\partial\psi}{\partial u_{i}}j_{i},~~~
	\psi''(u)(j,h)=\sum_{i,m=1}^{3}\frac{\partial^{2}\psi}{\partial u_{i}\partial u_{m}}j_{i}h_{m}.
\end{equation}
where $j=(j_{1},j_{2},j_{3}), h=(h_{1},h_{2},h_{3})$.

Next, set $\bar{u}=(\tilde{\mu}, \bar{x}, \bar{y})$ and $\hat{u}=(\tilde{\mu}, \hat{x}, \hat{y})$ for $\bar{x}, \bar{y}, \hat{x}, \hat{y}\in \mathbb{R}$, $\tilde{\mu}\in \mathbb{R_{+}}$. Thus, $\bar{u}-\hat{u}=(0,\bar{x}- \hat{x},\bar{y}-\hat{y})$.
Then, for $\psi:\mathbb{R_{+}}\times \mathbb{R}\times \mathbb{R} \rightarrow \mathbb{R}$, one can see that 
\begin{equation}\label{eqqq1}
	\psi(\tilde{\mu}, \bar{x}, \bar{y})-\psi(\tilde{\mu}, \hat{x}, \hat{y})=\psi'(\tilde{\mu},x,y)\mid _{x=\hat{x},y=\hat{y}}(0,\bar{x}- \hat{x},\bar{y}-\hat{y})+R_{\psi}(\tilde{\mu},\bar{x}, \bar{y}, \hat{x}, \hat{y}),
\end{equation}
where 
\begin{equation}\label{eq32}
	\begin{split}
R_{\psi}(\tilde{\mu},\bar{x}, \bar{y}, \hat{x}, \hat{y})=
\int_{0}^{1}&(1-\vartheta)\psi''(\tilde{\mu},x,y)\mid _{x=\hat{x}+\vartheta(\bar{x}-\hat{x}), y=\hat{y}+\vartheta(\bar{y}-\hat{y})}\\
&\big((0,\bar{x}-\hat{x}, \bar{y}-\hat{y}), (0,\bar{x}-\hat{x}, \bar{y}-\hat{y})\big)d\vartheta,
\end{split}
\end{equation}	
for any $\tilde{\mu}\in \mathbb{R_{+}}$ and $\bar{x}, \bar{y}, \hat{x}, \hat{y}\in \mathbb{R}$. Here, $\psi'$ and $\psi''$ are defined as
\begin{equation*}
	\psi'(\tilde{\mu},x,y)(0,\bar{x}-\hat{x}, \bar{y}-\hat{y})=\frac{\partial\psi}{\partial x}(\bar{x}-\hat{x})+\frac{\partial\psi}{\partial y}(\bar{y}-\hat{y}),
\end{equation*}
\begin{equation*}
	\begin{split}
		\psi''&(\tilde{\mu},x,y)\big((0,\bar{x}-\hat{x}, \bar{y}-\hat{y}),(0,\bar{x}-\hat{x}, \bar{y}-\hat{y})\big)
		\\=&\frac{\partial^{2}\psi}{\partial x\partial x}(\bar{x}-\hat{x})^{2}+\frac{\partial^{2}\psi}{\partial x\partial y}(\bar{x}-\hat{x})(\bar{y}-\hat{y})
		+\frac{\partial^{2}\psi}{\partial y\partial y}(\bar{y}-\hat{y})^{2}+\frac{\partial^{2}\psi}{\partial y\partial x}(\bar{y}-\hat{y})(\bar{x}-\hat{x}).
	\end{split}
\end{equation*}
for  $\tilde{\mu}\in \mathbb{R_{+}}$ and $\bar{x}, \bar{y}, \hat{x}, \hat{y}\in \mathbb{R}$.
By setting $\tilde{\mu}=\mu(t), \bar{x}=\eta_{\Delta}(t), \bar{y}=\eta_{\Delta}(t-\tau), \hat{x}=\bar{\eta}_{\Delta}(t), \hat{y}=\bar{\eta}_{\Delta}(t-\tau)$, we get from (\ref{eqqq1}) that
\begin{equation}\label{eqeq07}
	\begin{split}
		&\psi(\mu(t),\eta_{\Delta}(t), \eta_{\Delta}(t-\tau))-\psi(\mu(t),\bar{\eta}_{\Delta}(t), \bar{\eta}_{\Delta}(t-\tau))\\
		&=\psi_{1}(\mu(t),\bar{\eta}_{\Delta}(t), \bar{\eta}_{\Delta}(t-\tau))(\eta_{\Delta}(t)-\bar{\eta}_{\Delta}(t))
		\\
		&+\psi_{2}(\mu(t),\bar{\eta}_{\Delta}(t), \bar{\eta}_{\Delta}(t-\tau))(\eta_{\Delta}(t-\tau)-\bar{\eta}_{\Delta}(t-\tau))
		\\
		&+R_{\psi}(t,\eta_{\Delta}(t), \bar{\eta}_{\Delta}(t), \eta_{\Delta}(t-\tau),\bar{\eta}_{\Delta}(t-\tau))
		\\&=\psi_{1}(\mu(t),\bar{\eta}_{\Delta}(t), \bar{\eta}_{\Delta}(t-\tau))\int_{\kappa(t)}^{t}g_{\Delta}(\mu(s),\bar{\eta}_{\Delta}(s), \bar{\eta}_{\Delta}(s-\tau))dB(s)
		\\&+\psi_{2}(\mu(t),\bar{\eta}_{\Delta}(t), \bar{\eta}_{\Delta}(t-\tau))\int_{\kappa(t)-\tau}^{t-\tau}g_{\Delta}(\mu(s),\bar{\eta}_{\Delta}(s), \bar{\eta}_{\Delta}(s-\tau))dB(s)
		\\&+\bar{R}_{\psi}(t,\eta_{\Delta}(t), \bar{\eta}_{\Delta}(t), \eta_{\Delta}(t-\tau),\bar{\eta}_{\Delta}(t-\tau)),
	\end{split}
\end{equation}
where
\begin{equation}\label{eq33}
	\begin{split}
		\bar{R}_{\psi}&(t,\eta_{\Delta}(t), \bar{\eta}_{\Delta}(t), \eta_{\Delta}(t-\tau),\bar{\eta}_{\Delta}(t-\tau))
		\\=&\psi_{1}(\mu(t),\bar{\eta}_{\Delta}(t), \bar{\eta}_{\Delta}(t-\tau))\\
		&\cdot \Big[\theta f_{\Delta}(t,\eta_{\Delta}(t), \eta_{\Delta}(t-\tau))\Delta-\theta f_{\Delta}(\mu(t),\bar{\eta}_{\Delta}(t), \bar{\eta}_{\Delta}(t-\tau))\Delta
		\\&\left.+\int_{\kappa(t)}^{t}f_{\Delta}(\mu(s),\bar{\eta}_{\Delta}(s), \bar{\eta}_{\Delta}(s-\tau))ds\right.
		\\&\left.+\int_{\kappa(t)}^{t}g_{1,\Delta}(\mu(s),\bar{\eta}_{\Delta}(s), \bar{\eta}_{\Delta}(s-\tau))g_{\Delta}(\mu(s),\bar{\eta}_{\Delta}(s), \bar{\eta}_{\Delta}(s-\tau))\Delta \hat{B}(s)dB(s)\right.
		\\&+\int_{\kappa(t)}^{t}g_{2,\Delta}(\mu(s),\bar{\eta}_{\Delta}(s), \bar{\eta}_{\Delta}(s-\tau))\\
		&\cdot g_{\Delta}(\mu(s-\tau),\bar{\eta}_{\Delta}(s-\tau), \bar{\eta}_{\Delta}(s-2\tau))\Delta \hat{B}(s-\tau)dB(s)\Big]
		\\&+\psi_{2}(\mu(t),\bar{\eta}_{\Delta}(t), \bar{\eta}_{\Delta}(t-\tau))\\
		&\cdot \Big[\theta f_{\Delta}(t-\tau,\eta_{\Delta}(t-\tau), \eta_{\Delta}(t-2\tau))\Delta-\theta f_{\Delta}(\mu(t-\tau),\bar{\eta}_{\Delta}(t-\tau), \bar{\eta}_{\Delta}(t-2\tau))\Delta
		\\&\left.+\int_{\kappa(t)-\tau}^{t-\tau}f_{\Delta}(\mu(s),\bar{\eta}_{\Delta}(s), \bar{\eta}_{\Delta}(s-\tau))ds\right.
		\\&\left.+\int_{\kappa(t)-\tau}^{t-\tau}g_{1,\Delta}(\mu(s),\bar{\eta}_{\Delta}(s), \bar{\eta}_{\Delta}(s-\tau))g_{\Delta}(\mu(s),\bar{\eta}_{\Delta}(s), \bar{\eta}_{\Delta}(s-\tau))\Delta \hat{B}(s)dB(s)\right.
		\\&+\int_{\kappa(t)-\tau}^{t-\tau}g_{2,\Delta}(\mu(s),\bar{\eta}_{\Delta}(s), \bar{\eta}_{\Delta}(s-\tau))\\
		&\cdot g_{\Delta}(\mu(s-\tau),\bar{\eta}_{\Delta}(s-\tau), \bar{\eta}_{\Delta}(s-2\tau))\Delta \hat{B}(s-\tau)dB(s)\Big]
		\\&+ R _{\psi}(\mu(t),\eta_{\Delta}(t), \bar{\eta}_{\Delta}(t), \eta_{\Delta}(t-\tau),\bar{\eta}_{\Delta}(t-\tau)),
	\end{split}
\end{equation}
and
\begin{equation}
	\psi_{1}(t,x,y)=\frac{\partial\psi}{\partial x},~~~ \psi_{2}(t,x,y)=\frac{\partial\psi}{\partial y}.
\end{equation}

\section{Strong convergence rate}

For ${\bar{q}}\geq 2$, the strong convergence rate in $\mathcal{L}^{\bar{q}}$ sense is investigated in this section. 
First,  some necessary lemmas are presented.

\begin{lem}\label{le31}
	For any $\Delta\in (0,\Delta^{*})$ and $t\in [0,T]$, one can see that
	\begin{equation}\label{mea1}
		\mathbb{E}\left|Z_{\Delta}(t)-\bar{Z}_{\Delta}(t)\right|^{\bar{p}}\leq C\Delta^{\frac{\bar{p}}{2}}\alpha(\Delta)^{\bar{p}},~~~\forall \bar{p} >0,
	\end{equation}
and
\begin{equation}\label{mea2}
	\mathbb{E}\left|\eta_{\Delta}(t)-\bar{\eta}_{\Delta}(t)\right|^{\bar{p}}\leq C\Delta^{\frac{\bar{p}}{2}}\alpha(\Delta)^{\bar{p}},~~~\forall \bar{p} >0.
\end{equation}
Thus,
\begin{equation*}
	\lim_{\Delta\rightarrow 0}\mathbb{E}\left|Z_{\Delta}(t)-\bar{Z}_{\Delta}(t)\right|^{\bar{p}}=\lim_{\Delta\rightarrow 0}\mathbb{E}\left|\eta_{\Delta}(t)-\bar{\eta}_{\Delta}(t)\right|^{\bar{p}}=0,~~~\forall \bar{p} >0.
\end{equation*}
\end{lem}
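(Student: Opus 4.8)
The plan is to reduce both bounds to the size of the increment of the continuous interpolation over a single mesh interval. Fix $t\in[t_{k},t_{k+1})$; then $\mu(t)=\kappa(t)=t_{k}$, and, since the continuous processes agree with $Y_{\Delta}$ at the grid points, (\ref{eq22})--(\ref{eq23}) give $\bar{Z}_{\Delta}(t)=Z_{\Delta}(t_{k})$. Evaluating (\ref{eq21}) at $t$ and at $t_{k}$ and subtracting, the non-integral terms cancel and every $\int_{0}^{\cdot}$ collapses to $\int_{t_{k}}^{t}$, so that
\begin{equation*}
\begin{split}
Z_{\Delta}(t)-\bar{Z}_{\Delta}(t)=&\int_{t_{k}}^{t}f_{\Delta}\,ds+\int_{t_{k}}^{t}g_{\Delta}\,dB(s)+\int_{t_{k}}^{t}g_{1,\Delta}g_{\Delta}\,\Delta\hat{B}(s)\,dB(s)\\
&+\int_{t_{k}}^{t}g_{2,\Delta}\,g_{\Delta}\big(\mu(s-\tau),\bar{\eta}_{\Delta}(s-\tau),\bar{\eta}_{\Delta}(s-2\tau)\big)\,\Delta\hat{B}(s-\tau)\,dB(s),
\end{split}
\end{equation*}
where $f_{\Delta},g_{\Delta},g_{1,\Delta},g_{2,\Delta}$ denote their values at $(\mu(s),\bar{\eta}_{\Delta}(s),\bar{\eta}_{\Delta}(s-\tau))$ and the last integral is present only when $t\ge\tau$.

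Next I would estimate the $\bar{p}$-th moment of each of the four integrals for $\bar{p}\ge2$, using the uniform bound (\ref{eq30}). The drift integral has modulus at most $\Delta\alpha(\Delta)$, hence $\bar{p}$-th moment at most $\Delta^{\bar{p}}\alpha(\Delta)^{\bar{p}}\le\Delta^{\bar{p}/2}\alpha(\Delta)^{\bar{p}}$ since $\Delta\le1$. For the $g_{\Delta}$-integral, the Burkholder--Davis--Gundy (BDG) inequality together with $|g_{\Delta}|\le\alpha(\Delta)$ yields a bound $C(\Delta\alpha(\Delta)^{2})^{\bar{p}/2}=C\Delta^{\bar{p}/2}\alpha(\Delta)^{\bar{p}}$. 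For the two iterated integrals, on $[t_{k},t_{k+1})$ the coefficient products are $\mathcal{F}_{t_{k}}$-measurable and bounded by $\alpha(\Delta)^{2}$, while $|\Delta\hat{B}(s)|=|B(s)-B(t_{k})|$ and $|\Delta\hat{B}(s-\tau)|=|B(s-\tau)-B(t_{k-M})|$; applying BDG, then H\"older's inequality in $s$ (this is where $\bar{p}\ge2$ enters), then the Gaussian moment bound $\mathbb{E}|B(u)-B(v)|^{\bar{p}}\le C|u-v|^{\bar{p}/2}$, each of them is bounded by $C\alpha(\Delta)^{2\bar{p}}\Delta^{\bar{p}}$. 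Since $\Delta^{1/2}\le\Delta^{1/4}$ for $\Delta\in(0,1]$, (\ref{eq001}) gives $\alpha(\Delta)\Delta^{1/2}\le\alpha(\Delta)\Delta^{1/4}\le K_{0}$, so $\alpha(\Delta)^{2\bar{p}}\Delta^{\bar{p}}=\alpha(\Delta)^{\bar{p}}\Delta^{\bar{p}/2}\big(\alpha(\Delta)\Delta^{1/2}\big)^{\bar{p}}\le K_{0}^{\bar{p}}\alpha(\Delta)^{\bar{p}}\Delta^{\bar{p}/2}$. Summing the four bounds proves (\ref{mea1}) for $\bar{p}\ge2$, and for $0<\bar{p}<2$ it follows from Lyapunov's inequality $\mathbb{E}|Z_{\Delta}(t)-\bar{Z}_{\Delta}(t)|^{\bar{p}}\le\big(\mathbb{E}|Z_{\Delta}(t)-\bar{Z}_{\Delta}(t)|^{2}\big)^{\bar{p}/2}$.

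For (\ref{mea2}), subtracting (\ref{eq23}) from (\ref{eq22}) gives $\eta_{\Delta}(t)-\bar{\eta}_{\Delta}(t)=\big(Z_{\Delta}(t)-\bar{Z}_{\Delta}(t)\big)+\theta\Delta\big(f_{\Delta}(t,\eta_{\Delta}(t),\eta_{\Delta}(t-\tau))-f_{\Delta}(\mu(t),\bar{\eta}_{\Delta}(t),\bar{\eta}_{\Delta}(t-\tau))\big)$; by (\ref{eq30}) the second summand has modulus at most $2\Delta\alpha(\Delta)$, hence $\bar{p}$-th moment at most $C\Delta^{\bar{p}}\alpha(\Delta)^{\bar{p}}\le C\Delta^{\bar{p}/2}\alpha(\Delta)^{\bar{p}}$, and (\ref{mea1}) then yields (\ref{mea2}). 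The stated limits are immediate from $\Delta^{\bar{p}/2}\alpha(\Delta)^{\bar{p}}=\big(\Delta^{1/2}\alpha(\Delta)\big)^{\bar{p}}\le K_{0}^{\bar{p}}\Delta^{\bar{p}/4}\to0$ as $\Delta\to0$, again by (\ref{eq001}).

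The main obstacle is the treatment of the two iterated stochastic integrals: the BDG--H\"older--Gaussian-moment chain leaves the factor $\alpha(\Delta)^{2\bar{p}}\Delta^{\bar{p}}$, and one has to notice that condition (\ref{eq001}) is exactly what is needed to absorb the extra power of $\alpha(\Delta)$ and recover the asserted order $\alpha(\Delta)^{\bar{p}}\Delta^{\bar{p}/2}$. The cancellation of the pre-$t_{k}$ terms and the estimates of the drift and of the plain diffusion integral are routine.
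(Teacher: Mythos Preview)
Your proposal is correct and follows essentially the same route as the paper: you derive the same four-integral decomposition of $Z_{\Delta}(t)-\bar{Z}_{\Delta}(t)$ over $[\kappa(t),t]$, bound each term for $\bar{p}\ge2$ via BDG/H\"older and the uniform bound~(\ref{eq30}), invoke~(\ref{eq001}) to reduce $\Delta^{\bar{p}}\alpha(\Delta)^{2\bar{p}}$ to $\Delta^{\bar{p}/2}\alpha(\Delta)^{\bar{p}}$, pass to $0<\bar{p}<2$ by H\"older/Lyapunov, and obtain~(\ref{mea2}) from~(\ref{mea1}) via the relation between $\eta_{\Delta}-\bar{\eta}_{\Delta}$ and $Z_{\Delta}-\bar{Z}_{\Delta}$. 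The paper's proof is the same in structure and in the key estimates.
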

\begin{proof}
	By (\ref{eq21}), (\ref{eq22}) and (\ref{eq23}), we derive that
	\begin{equation}\label{eq25}
		\begin{split}
			Z_{\Delta}&(t)=\bar{Z}_{\Delta}(t)+\int_{\kappa(t)}^{t}f_{\Delta}(\mu(s),\bar{\eta}_{\Delta}(s), \bar{\eta}_{\Delta}(s-\tau))ds\\
			&+\int_{\kappa(t)}^{t}g_{\Delta}(\mu(s),\bar{\eta}_{\Delta}(s), \bar{\eta}_{\Delta}(s-\tau))dB(s)
			\\&+\int_{\kappa(t)}^{t}g_{1,\Delta}(\mu(s),\bar{\eta}_{\Delta}(s), \bar{\eta}_{\Delta}(s-\tau)) g_{\Delta}(\mu(s),\bar{\eta}_{\Delta}(s), \bar{\eta}_{\Delta}(s-\tau))\Delta \hat{B}(s)dB(s)
			\\&+\int_{\kappa(t)}^{t}g_{2,\Delta}(\mu(s),\bar{\eta}_{\Delta}(s), \bar{\eta}_{\Delta}(s-\tau))\\
			&\cdot g_{\Delta}(\mu(s-\tau),\bar{\eta}_{\Delta}(s-\tau), \bar{\eta}_{\Delta}(s-2\tau))\Delta \hat{B}(s-\tau)dB(s).
		\end{split}
	\end{equation}
	For any fixed $\bar{p}\geq2$ and $t\in [0,T]$, we can get from the Burkholder-Davis-Gundy	inequality and the  H\"{o}lder  inequality that
	\begin{equation*}
		\begin{split}
			&\mathbb{E}\left|Z_{\Delta}(t)-\bar{Z}_{\Delta}(t)\right|^{\bar{p}}\\
			\\&\leq C\Big[\Delta^{\bar{p}-1}\mathbb{E}\int_{\kappa(t)}^{t}|f_{\Delta}(\mu(s),\bar{\eta}_{\Delta}(s), \bar{\eta}_{\Delta}(s-\tau))|^{\bar{p}}ds
			\\&\left.\quad+\Delta^{\frac{\bar{p}}{2}-1}\mathbb{E}\int_{\kappa(t)}^{t}|g_{\Delta}(\mu(s),\bar{\eta}_{\Delta}(s), \bar{\eta}_{\Delta}(s-\tau))|^{\bar{p}}ds\right.
			\\&\left.\quad+\Delta^{\frac{\bar{p}}{2}-1}\mathbb{E}\int_{\kappa(t)}^{t}|g_{1,\Delta}(\mu(s),\bar{\eta}_{\Delta}(s), \bar{\eta}_{\Delta}(s-\tau))g_{\Delta}(\mu(s),\bar{\eta}_{\Delta}(s), \bar{\eta}_{\Delta}(s-\tau))\Delta \hat{B}(s)|^{\bar{p}}ds\right.
		\end{split}
	\end{equation*}
			\begin{equation*}
				\begin{split}
			&+\Delta^{\frac{\bar{p}}{2}-1}\mathbb{E}\int_{\kappa(t)}^{t}|g_{2,\Delta}(\mu(s),\bar{\eta}_{\Delta}(s), \bar{\eta}_{\Delta}(s-\tau))\\
			&\quad\quad \cdot g_{\Delta}(\mu(s-\tau),\bar{\eta}_{\Delta}(s-\tau), \bar{\eta}_{\Delta}(s-2\tau))\Delta \hat{B}(s-\tau))|^{\bar{p}}ds
			\Big]
			\\&\leq C\left(\Delta^{\bar{p}}\alpha(\Delta)^{\bar{p}}
			+\Delta^{\frac{\bar{p}}{2}}\alpha(\Delta)^{\bar{p}}
			+
			\Delta^{\bar{p}}\alpha(\Delta)^{2\bar{p}}\right)
			\leq C\Delta^{\frac{\bar{p}}{2}}\alpha(\Delta)^{\bar{p}},
		\end{split}
	\end{equation*}
where $\Delta^{\frac{\bar{p}}{2}}\alpha(\Delta)^{\bar{p}}\leq K_0^{\bar{p}}\Delta^{\frac{\bar{p}}{4}}$ is used.
	For $0<\bar{p}<2$, applying H\"{o}lder's inequality can give the desired result (\ref{mea1}). Then combining (\ref{eq22}),  (\ref{eq23}) and  (\ref{mea1}) yields (\ref{mea2}). The proof is complete.
\end{proof}

\begin{lem}\label{le32}
	Let Assumptions \ref{a31} and \ref{a33} hold. Then we derive that
	\begin{equation*}
		\sup _{0 < \Delta < \Delta^{*}}\sup _{0 \leq t \leq T} \mathbb{E}\left|\eta_{\Delta}(t)\right|^{p}\leq C.
	\end{equation*}
\end{lem}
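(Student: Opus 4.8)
The plan is to work with the auxiliary process $Z_\Delta$, which is by construction a genuine It\^{o} process. From (\ref{eq21})--(\ref{eq23}) one reads off that, for $t\in[0,T]$,
$$ Z_\Delta(t)=Z_\Delta(0)+\int_0^t f_\Delta(\mu(s),\bar\eta_\Delta(s),\bar\eta_\Delta(s-\tau))\,ds+\int_0^t G_\Delta(s)\,dB(s), $$
where $G_\Delta(s)$ is the sum of $g_\Delta(\mu(s),\bar\eta_\Delta(s),\bar\eta_\Delta(s-\tau))$ and the two Milstein corrections built from $g_{1,\Delta}$, $g_{2,\Delta}$ and the increments $\Delta\hat B(s)$, $\Delta\hat B(s-\tau)$; by (\ref{eq30}), $|G_\Delta(s)|\le\alpha(\Delta)\bigl(1+\alpha(\Delta)|\Delta\hat B(s)|+\alpha(\Delta)|\Delta\hat B(s-\tau)|\bigr)$. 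Since every coefficient in the recursion is deterministically bounded by $\alpha(\Delta)$ and the implicit increment $\theta\Delta f_\Delta(\cdot)$ by $\theta\Delta\alpha(\Delta)\le K_0$, an induction over the grid gives $\mathbb{E}|\eta_\Delta(t)|^r<\infty$ and $\mathbb{E}|Z_\Delta(t)|^r<\infty$ for every $r>0$ (for each fixed $\Delta$); this legitimises applying It\^{o}'s formula to $t\mapsto|Z_\Delta(t)|^p$ (which is $C^2$ because $p>q>2$) and ensures the stochastic integral it produces is a true, mean-zero martingale.

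First I would apply It\^{o}'s formula, take expectations, and estimate the drift integrand $p|Z_\Delta(s)|^{p-2}Z_\Delta(s)f_\Delta(\cdots)+\tfrac{p(p-1)}{2}|Z_\Delta(s)|^{p-2}|G_\Delta(s)|^2$. The key step is to make Lemma \ref{le2} applicable: by (\ref{eq23}), $Z_\Delta(s)=\bar\eta_\Delta(s)-\theta\Delta f_\Delta(\mu(s),\bar\eta_\Delta(s),\bar\eta_\Delta(s-\tau))+(Z_\Delta(s)-\bar Z_\Delta(s))$, and the cross term $-\theta\Delta|f_\Delta(\cdots)|^2$ is non-positive (this is where $\theta>0$ enters) and may be discarded. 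Expanding $|G_\Delta(s)|^2$ with an elementary $|a+b+c|^2$ inequality so that the coefficient of $|g_\Delta(\cdots)|^2$ stays exactly $p(p-1)$, the leading part of the integrand is at most $p|Z_\Delta(s)|^{p-2}\bigl[\bar\eta_\Delta(s)f_\Delta(\cdots)+(p-1)|g_\Delta(\cdots)|^2\bigr]\le p\bar K_2|Z_\Delta(s)|^{p-2}\bigl(1+|\bar\eta_\Delta(s)|^2+|\bar\eta_\Delta(s-\tau)|^2\bigr)$ by Lemma \ref{le2}, and the remaining pieces are $p|Z_\Delta(s)|^{p-2}\bigl(Z_\Delta(s)-\bar Z_\Delta(s)\bigr)f_\Delta(\cdots)$ together with Milstein remainders bounded by $C|Z_\Delta(s)|^{p-2}\alpha(\Delta)^4\bigl(|\Delta\hat B(s)|^2+|\Delta\hat B(s-\tau)|^2\bigr)$.

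Next I would apply Young's inequality (exponents $\tfrac{p}{p-2},\tfrac{p}{2}$) to each term, splitting off $|Z_\Delta(s)|^p$ and leaving the remainders $C\bigl(1+|\bar\eta_\Delta(s)|^p+|\bar\eta_\Delta(s-\tau)|^p\bigr)$, $C\bigl(\alpha(\Delta)|Z_\Delta(s)-\bar Z_\Delta(s)|\bigr)^{p/2}$ and $C\alpha(\Delta)^{2p}\bigl(|\Delta\hat B(s)|^p+|\Delta\hat B(s-\tau)|^p\bigr)$. Taking expectations, Lemma \ref{le31} gives $\mathbb{E}\bigl(\alpha(\Delta)|Z_\Delta(s)-\bar Z_\Delta(s)|\bigr)^{p/2}\le C\Delta^{p/4}\alpha(\Delta)^p=C\bigl(\Delta^{1/4}\alpha(\Delta)\bigr)^p\le CK_0^p$, while $\mathbb{E}|\Delta\hat B(s)|^p\le C\Delta^{p/2}$ yields $C\alpha(\Delta)^{2p}\mathbb{E}|\Delta\hat B(s)|^p\le C\bigl(\alpha(\Delta)^4\Delta\bigr)^{p/2}\le CK_0^{2p}$ — both constants independent of $\Delta$ thanks to (\ref{eq001}). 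Using $\bar\eta_\Delta(s)=\eta_\Delta(\kappa(s))$ with $\kappa(s)\le s$ (and $\bar\eta_\Delta(s-\tau)$ either of the same type or a value of $\xi$ when $s<\tau$), together with $|\eta_\Delta(r)-Z_\Delta(r)|\le\theta\Delta\alpha(\Delta)\le K_0$ (hence $\mathbb{E}|\eta_\Delta(r)|^p\le C\mathbb{E}|Z_\Delta(r)|^p+C$) and $\mathbb{E}|Z_\Delta(0)|^p\le C(\mathbb{E}\|\xi\|^p+1)$, the estimate collapses, for $H(t):=\sup_{0\le r\le t}\mathbb{E}|Z_\Delta(r)|^p$ (finite for each $\Delta$, non-decreasing, hence measurable), to $H(t)\le C(1+T)+C\int_0^t H(s)\,ds$ with $C$ depending only on $T,p,K_0,\bar K_2,\mathbb{E}\|\xi\|^p$. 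Gronwall's inequality then gives $H(T)\le C(1+T)e^{CT}$, whence $\sup_{0\le t\le T}\mathbb{E}|\eta_\Delta(t)|^p\le C\,H(T)+C$, uniformly in $\Delta\in(0,\Delta^{*})$.

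The step I expect to be the main obstacle is the uniform-in-$\Delta$ control of the Milstein correction terms: these are products of truncated coefficients — each of size $\alpha(\Delta)$, which diverges as $\Delta\to0$ — with Brownian increments, so the argument genuinely hinges on using the scaling restriction $\Delta^{1/4}\alpha(\Delta)\le K_0$ from (\ref{eq001}) with precisely the right powers in order to turn these a priori divergent contributions into a fixed constant. A secondary source of care is that the drift and diffusion in $Z_\Delta$ are sampled at the piecewise-constant interpolant $\bar\eta_\Delta$ rather than at $Z_\Delta(t)$ itself, which forces the decomposition through $\bar Z_\Delta$ and the use of Lemma \ref{le31}; and, as always for delay equations, the delayed term $\bar\eta_\Delta(s-\tau)$, which is disposed of by passing to the non-decreasing envelope $H$ before invoking Gronwall.
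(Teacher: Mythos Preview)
Your proposal is correct and follows essentially the same route as the paper's proof: apply It\^{o}'s formula to $|Z_\Delta|^p$, decompose $Z_\Delta(s)f_\Delta(\cdots)$ through $\bar\eta_\Delta(s)$ so that Lemma~\ref{le2} absorbs the main drift--diffusion part, control the $(Z_\Delta-\bar Z_\Delta)f_\Delta$ residual via Lemma~\ref{le31}, bound the Milstein corrections using $\Delta^{1/4}\alpha(\Delta)\le K_0$, and finish with Gronwall. The only cosmetic differences are that you drop the non-positive term $-\theta\Delta|f_\Delta|^2$ outright (the paper bounds it by $C\theta^{p/2}\Delta^{p/2}\alpha(\Delta)^p$) and you pass to the envelope $H(t)=\sup_{r\le t}\mathbb{E}|Z_\Delta(r)|^p$ to run Gronwall once, whereas the paper applies Gronwall twice (first for $Z_\Delta$, then after converting to $\eta_\Delta$).
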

\begin{proof}
	We get from It\^{o}'s formula  and (\ref{eq25}) that
	\begin{equation*}
		\begin{split}
			&\mathbb{E}|Z_{\Delta}(t)|^{p}-|Z_{\Delta}(0)|^{p}\\
			&\leq
			\mathbb{E}\int_{0}^{t}p|Z_{\Delta}(s)|^{p-2}\left( Z_{\Delta}(s)^T f_{\Delta}(\mu(s),\bar{\eta}_{\Delta}(s), \bar{\eta}_{\Delta}(s-\tau))\right.+\frac{p-1}{2}|g_{\Delta}(\mu(s),\bar{\eta}_{\Delta}(s), \bar{\eta}_{\Delta}(s-\tau))
			\\&\left.+g_{1,\Delta}(\mu(s),\bar{\eta}_{\Delta}(s), \bar{\eta}_{\Delta}(s-\tau))g_{\Delta}(\mu(s),\bar{\eta}_{\Delta}(s), \bar{\eta}_{\Delta}(s-\tau))\Delta \hat{B}(s)\right.
			\\&\left.+g_{2,\Delta}(\mu(s),\bar{\eta}_{\Delta}(s), \bar{\eta}_{\Delta}(s-\tau))g_{\Delta}(\mu(s-\tau),\bar{\eta}_{\Delta}(s-\tau), \bar{\eta}_{\Delta}(s-2\tau))\Delta \hat{B}(s-\tau)|^{2}\right)ds
			\\&\leq\mathbb{E}\int_{0}^{t}p|Z_{\Delta}(s)|^{p-2}( Z_{\Delta}(s)-\bar{\eta}_{\Delta}(s))^T f_{\Delta}(\mu(s),\bar{\eta}_{\Delta}(s), \bar{\eta}_{\Delta}(s-\tau)) ds
			\\&+\mathbb{E}\int_{0}^{t}p|Z_{\Delta}(s)|^{p-2}( \bar{\eta}_{\Delta}(s)^T f_{\Delta}(\mu(s),\bar{\eta}_{\Delta}(s), \bar{\eta}_{\Delta}(s-\tau))+(p-1)|g_{\Delta}(\mu(s),\bar{\eta}_{\Delta}(s), \bar{\eta}_{\Delta}(s-\tau))|^{2}) ds
			\\&+\mathbb{E}\int_{0}^{t}p(p-1)|Z_{\Delta}(s)|^{p-2}\left|g_{1,\Delta}(\mu(s),\bar{\eta}_{\Delta}(s), \bar{\eta}_{\Delta}(s-\tau))g_{\Delta}(\mu(s),\bar{\eta}_{\Delta}(s), \bar{\eta}_{\Delta}(s-\tau))\Delta \hat{B}(s)\right.
			\\&\left.+g_{2,\Delta}(\mu(s),\bar{\eta}_{\Delta}(s), \bar{\eta}_{\Delta}(s-\tau))g_{\Delta}(\mu(s-\tau),\bar{\eta}_{\Delta}(s-\tau), \bar{\eta}_{\Delta}(s-2\tau))\Delta \hat{B}(s-\tau)\right|^{2} ds
			\\&=:I_{1}+I_{2}+I_{3}.
		\end{split}
	\end{equation*}
By Lemma \ref{le31}, we derive that
	\begin{equation}\label{iii1}
		\begin{split}
			I_{1}\leq&(p-2)\mathbb{E}\int_{0}^{t}|Z_{\Delta}(s)|^{p}ds+2\mathbb{E}\int_{0}^{t}|( Z_{\Delta}(s)-\bar{\eta}_{\Delta}(s))^Tf_{\Delta}(\mu(s),\bar{\eta}_{\Delta}(s), \bar{\eta}_{\Delta}(s-\tau))|^{\frac{p}{2}}ds
			\\\leq& C\mathbb{E}\int_{0}^{t}|Z_{\Delta}(s)|^{p}ds+C\mathbb{E}\int_{0}^{t}|( Z_{\Delta}(s)-\bar{Z}_{\Delta}(s))^Tf_{\Delta}(\mu(s),\bar{\eta}_{\Delta}(s), \bar{\eta}_{\Delta}(s-\tau))|^{\frac{p}{2}}ds
			\\&+C\mathbb{E}\int_{0}^{t}|\theta f_{\Delta}^{2}(\mu(s),\bar{\eta}_{\Delta}(s), \bar{\eta}_{\Delta}(s-\tau))\Delta|^{\frac{p}{2}}ds
			\\\leq& C \mathbb{E}\int_{0}^{t}|Z_{\Delta}(s)|^{p}ds+C\mathbb{E}\int_{0}^{t}\Delta^{\frac{p}{4}}\alpha(\Delta)^{p}ds
			+C\mathbb{E}\int_{0}^{t}\theta^{\frac{p}{2}}\Delta^{\frac{p}{2}}\alpha(\Delta)^{p} ds
			\\\leq& C+C\mathbb{E}\int_{0}^{t}|Z_{\Delta}(s)|^{p}ds.
		\end{split}
	\end{equation}
By Lemma \ref{le2}, we get that
	\begin{equation}\label{iii2}
		\begin{split}
			I_{2}\leq&C\mathbb{E}\int_{0}^{t}|Z_{\Delta}(s)|^{p-2}(1+|\bar{\eta}_{\Delta}(s)|^{2}+|\bar{\eta}_{\Delta}(s-\tau)|^{2})ds
			\\\leq&C\mathbb{E}\int_{0}^{t}|Z_{\Delta}(s)|^{p}ds+C\mathbb{E}\int_{0}^{t}(1+|\bar{\eta}_{\Delta}(s)|^{p}+|\bar{\eta}_{\Delta}(s-\tau)|^{p})ds.
		\end{split}
	\end{equation}
Using (\ref{eq001}) and (\ref{eq30}) yields that
	\begin{equation}\label{iii3}
		\begin{split}
			I_{3}\leq&C\mathbb{E}\int_{0}^{t}|Z_{\Delta}(s)|^{p}ds\\
			&+C\mathbb{E}\int_{0}^{t}\left|g_{1,\Delta}(\mu(s),\bar{\eta}_{\Delta}(s), \bar{\eta}_{\Delta}(s-\tau))g_{\Delta}(\mu(s),\bar{\eta}_{\Delta}(s), \bar{\eta}_{\Delta}(s-\tau))\Delta \hat{B}(s)\right.
			\\&\left.+g_{2,\Delta}(\mu(s),\bar{\eta}_{\Delta}(s), \bar{\eta}_{\Delta}(s-\tau))g_{\Delta}(\mu(s-\tau),\bar{\eta}_{\Delta}(s-\tau), \bar{\eta}_{\Delta}(s-2\tau))\Delta \hat{B}(s-\tau))\right|^{p}ds
			\\\leq&C\mathbb{E}\int_{0}^{t}|Z_{\Delta}(s)|^{p}ds+C\mathbb{E}\int_{0}^{t}\Delta^{\frac{p}{2}}\alpha(\Delta)^{2p} ds
			\\\leq&C+C\mathbb{E}\int_{0}^{t}|Z_{\Delta}(s)|^{p}ds.
		\end{split}
	\end{equation}
	Thus, from (\ref{iii1}) - (\ref{iii3}), we have
	\begin{equation}
		\mathbb{E}|Z_{\Delta}(t)|^{p}\leq C+C\int_{0}^{t}\mathbb{E}|Z_{\Delta}(s)|^{p}ds+C\mathbb{E}\int_{0}^{t}\left(|\bar{\eta}_{\Delta}(s)|^{p}+|\bar{\eta}_{\Delta}(s-\tau)|^{p}\right)ds.
	\end{equation}
	Applying Gronwall's inequality gives that
	\begin{equation*}
		\mathbb{E}|Z_{\Delta}(t)|^{p}\leq C+C\mathbb{E}\int_{0}^{t}\left(|\bar{\eta}_{\Delta}(s)|^{p}+|\bar{\eta}_{\Delta}(s-\tau)|^{p}\right)ds.
	\end{equation*}
	By (\ref{eq22}) and the elementary inequality
	$|u-v|^{p}\geq2^{1-p}|u|^{p}-|v|^{p},  u,v>0,$
	one can see that
	\begin{equation*}
		\begin{split}
			|Z_{\Delta}(t)|^{p}\geq&2^{1-p}|\eta_{\Delta}(t)|^{p}-\theta^{p}\Delta^{p}|f_{\Delta}(t,\eta_{\Delta}(t), \eta_{\Delta}(t-\tau))|^{p}
			\\\geq&2^{1-p}|\eta_{\Delta}(t)|^{p}-\theta^{p}\Delta^{p}\alpha (\Delta)^{p}.
		\end{split}
	\end{equation*}
	Thus,
	\begin{equation*}
		\begin{split}
			\sup_{0\leq r\leq t}\mathbb{E}|\eta_{\Delta}(r)|^{p}\leq C\left(1+\int_{0}^{t}\sup_{0\leq r\leq s}\mathbb{E}|Z_{\Delta}(r)|^{p}ds\right)
			\leq C\left(1+\int_{0}^{t}\sup_{0\leq r\leq s}\mathbb{E}|\eta_{\Delta}(r)|^{p}ds\right).
		\end{split}
	\end{equation*}
	Using Gronwall's inequality again yields that
	\begin{equation*}
			\sup _{0 < \Delta < \Delta^{*}}\sup _{0 \leq t \leq T} \mathbb{E}\left|\eta_{\Delta}(t)\right|^{p}\leq C.
	\end{equation*}
We complete the proof.
\end{proof}

The following lemma could be obtained by using  Lemma \ref{le32}.
\begin{lem}\label{lemmaa33}
	Let Assumptions \ref{a31}, \ref{a33} and \ref{a36} hold. For  $q>2$,
suppose that $p\geq2(1+\beta)q$ holds.  Then for any $\bar{q}\in [2,q)$, $\Delta\in (0, \Delta^{*})$ and $t\in [0,T]$, we get that
	\begin{equation*}
		\begin{split}
			\sup _{0 < \Delta < \Delta^{*}}&\sup _{0 \leq t \leq T}\left(\mathbb{E}|f(t,\eta_{\Delta}(t), \eta_{\Delta}(t-\tau))|^{2\bar{q}}\vee\mathbb{E}|g(t,\eta_{\Delta}(t), \eta_{\Delta}(t-\tau))|^{2\bar{q}}\right.
			\\&\vee \mathbb{E}|f_{i}(t,\eta_{\Delta}(t), \eta_{\Delta}(t-\tau))|^{2\bar{q}}
		\vee\mathbb{E}|g_{i}(t,\eta_{\Delta}(t), \eta_{\Delta}(t-\tau))|^{2\bar{q}}\\
			&	\left.\vee\mathbb{E}|f_{ij}(t,\eta_{\Delta}(t), \eta_{\Delta}(t-\tau))|^{2\bar{q}}\vee\mathbb{E}|g_{ij}(t,\eta_{\Delta}(t), \eta_{\Delta}(t-\tau))|^{2\bar{q}}
			\right)\leq C.
		\end{split}
	\end{equation*}
\end{lem}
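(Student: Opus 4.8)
The plan is to derive the claim directly from the polynomial growth of $f$, $g$ and their first- and second-order partial derivatives, together with the uniform $p$th moment bound of Lemma \ref{le32}.

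First I would record that each of the six functions $f,g,f_i,g_i,f_{ij},g_{ij}$ has at most polynomial spatial growth of order $\beta+1$, uniformly in $t\in[0,T]$. For the derivatives this is exactly Assumption \ref{a36}. For $f$ and $g$ themselves, setting $\bar a=\bar b=0$ in Assumption \ref{a31} gives
\[
|f(t,a,b)|\le |f(t,0,0)|+\bar K\big(1+|a|^{\beta}+|b|^{\beta}\big)\big(|a|+|b|\big)\le C\big(1+|a|^{\beta+1}+|b|^{\beta+1}\big),
\]
and analogously for $g$, where I use that $\sup_{0\le t\le T}\big(|f(t,0,0)|\vee|g(t,0,0)|\big)<\infty$ by the continuity of $f$ and $g$. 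Hence, writing $\Phi$ for any one of the six functions,
\[
\big|\Phi\big(t,\eta_\Delta(t),\eta_\Delta(t-\tau)\big)\big|\le C\big(1+|\eta_\Delta(t)|^{\beta+1}+|\eta_\Delta(t-\tau)|^{\beta+1}\big).
\]

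Next I would raise this inequality to the power $2\bar q$, use the elementary bound $(a+b+c)^{r}\le 3^{r-1}(a^{r}+b^{r}+c^{r})$ with $r=2\bar q\ge 4$, and take expectations, obtaining
\[
\mathbb{E}\big|\Phi\big(t,\eta_\Delta(t),\eta_\Delta(t-\tau)\big)\big|^{2\bar q}\le C\Big(1+\mathbb{E}|\eta_\Delta(t)|^{2(\beta+1)\bar q}+\mathbb{E}|\eta_\Delta(t-\tau)|^{2(\beta+1)\bar q}\Big).
\]
Since $\bar q<q$, we have $2(\beta+1)\bar q<2(\beta+1)q\le p$, so Lyapunov's inequality bounds the moments on the right by the corresponding $p$th moments. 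For the argument $\eta_\Delta(t)$ with $t\in[0,T]$ this is $\le C$ by Lemma \ref{le32}; for $\eta_\Delta(t-\tau)$ with $t\le\tau$ we have $\eta_\Delta(t-\tau)=\xi(t-\tau)$ with $\xi$ bounded, so again $\le C$, while for $t>\tau$ Lemma \ref{le32} applies. Taking the supremum over $t\in[0,T]$ and $\Delta\in(0,\Delta^{*})$ then yields the assertion.

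The argument is essentially bookkeeping and I do not expect a genuine obstacle; the only point needing care is the exponent arithmetic, namely checking that the moment order $2(\beta+1)\bar q$ generated by the polynomial growth never exceeds $p$, which is guaranteed precisely by the hypotheses $p\ge 2(1+\beta)q$ and $\bar q<q$. All the analytic difficulty has already been absorbed into Lemma \ref{le32}.
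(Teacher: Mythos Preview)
Your proposal is correct and matches the paper's approach: the paper itself omits the proof, stating only that the lemma ``could be obtained by using Lemma~\ref{le32},'' and what you have written is precisely the natural way to fill in that omission. The only minor remark is that your appeal to continuity of $f,g$ to bound $\sup_{0\le t\le T}|f(t,0,0)|$ is implicitly justified in the paper by the existence of the function $\Lambda$ (which gives $|f(t,0,0)|\le\Lambda(1)$), rather than by a separate continuity hypothesis.
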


\begin{lem}\label{le34}
	Let Assumptions  \ref{a31}, \ref{a33} and \ref{a36} hold. For  $q>2$,
	suppose that $p\geq2(1+\beta)q$ holds. Then for any $\bar{q}\in [2,q)$, $\Delta\in (0, \Delta^{*})$ and $t\in [0,T]$, we have
	\begin{equation*}
		\begin{split}
			&\quad\mathbb{E}|R_{f}(t,\eta_{\Delta}(t),\bar{\eta}_{\Delta}(t) ,\eta_{\Delta}(t-\tau),\bar{\eta}_{\Delta}(t-\tau))|^{\bar{q}}\\
			&\vee\mathbb{E}|R_{g}(t,\eta_{\Delta}(t),\bar{\eta}_{\Delta}(t) ,\eta_{\Delta}(t-\tau),\bar{\eta}_{\Delta}(t-\tau))|^{\bar{q}}
			\\&\vee\mathbb{E}|\bar{R}_{f}(t,\eta_{\Delta}(t),\bar{\eta}_{\Delta}(t) ,\eta_{\Delta}(t-\tau),\bar{\eta}_{\Delta}(t-\tau))|^{\bar{q}}
			\\&\vee\mathbb{E}|\bar{R}_{g}(t,\eta_{\Delta}(t),\bar{\eta}_{\Delta}(t) ,\eta_{\Delta}(t-\tau),\bar{\eta}_{\Delta}(t-\tau))|^{\bar{q}}
		\leq C\Delta^{\bar{q}}\alpha(\Delta)^{2\bar{q}}.
		\end{split}
	\end{equation*}
\end{lem}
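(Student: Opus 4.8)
The plan is to reduce each of the four remainders to three ingredients: the a priori moment bound of Lemma~\ref{le32}, the one‑step increment estimate of Lemma~\ref{le31}, and the polynomial growth of the first‑ and second‑order derivatives supplied by Assumption~\ref{a36}; these are then glued together by H\"older's inequality with conjugate exponents $2,2$. Throughout one uses that $\bar{\eta}_{\Delta}$ coincides with $\eta_{\Delta}$ (equivalently with $Y_{\Delta}$) at the grid points, so that $\sup_{0<\Delta<\Delta^{*}}\sup_{0\le t\le T}\mathbb{E}|\bar{\eta}_{\Delta}(t)|^{p}\le C$ as well, and that $2\bar{q}(\beta+1)<2q(\beta+1)\le p$ — this last inequality, together with $\bar{q}<q$, is exactly where the hypothesis $p\ge 2(1+\beta)q$ is consumed.

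Consider first $R_{f}$ (the argument for $R_{g}$ is identical). Start from the Taylor remainder representation (\ref{eq32}) with $\psi=f$, $\tilde{\mu}=\mu(t)$, $\bar{x}=\eta_{\Delta}(t)$, $\hat{x}=\bar{\eta}_{\Delta}(t)$, $\bar{y}=\eta_{\Delta}(t-\tau)$, $\hat{y}=\bar{\eta}_{\Delta}(t-\tau)$. Since any intermediate point $\hat{x}+\vartheta(\bar{x}-\hat{x})$, $\vartheta\in[0,1]$, has modulus at most $|\eta_{\Delta}(t)|\vee|\bar{\eta}_{\Delta}(t)|$ (and similarly in the $y$–variable), Assumption~\ref{a36} gives
\begin{equation*}
	|R_{f}|\le C\big(1+|\eta_{\Delta}(t)|^{\beta+1}+|\bar{\eta}_{\Delta}(t)|^{\beta+1}+|\eta_{\Delta}(t-\tau)|^{\beta+1}+|\bar{\eta}_{\Delta}(t-\tau)|^{\beta+1}\big)\big(|\eta_{\Delta}(t)-\bar{\eta}_{\Delta}(t)|^{2}+|\eta_{\Delta}(t-\tau)-\bar{\eta}_{\Delta}(t-\tau)|^{2}\big).
\end{equation*}
Raising to the power $\bar{q}$, taking expectation and applying H\"older's inequality splits the bound into $\big(\mathbb{E}(1+|\eta_{\Delta}(t)|^{\beta+1}+\cdots)^{2\bar{q}}\big)^{1/2}$, which is at most $C$ by Lemma~\ref{le32}, times $\big(\mathbb{E}(|\eta_{\Delta}(t)-\bar{\eta}_{\Delta}(t)|^{2}+\cdots)^{2\bar{q}}\big)^{1/2}$, which by Lemma~\ref{le31} applied with $\bar{p}=4\bar{q}$ is at most $C\Delta^{\bar{q}}\alpha(\Delta)^{2\bar{q}}$. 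This proves the estimate for $R_{f}$ and $R_{g}$.

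For $\bar{R}_{f}$ use the decomposition (\ref{eq33}): $\bar{R}_{f}=f_{1}(\mu(t),\bar{\eta}_{\Delta}(t),\bar{\eta}_{\Delta}(t-\tau))\,\Theta_{1}+f_{2}(\mu(t),\bar{\eta}_{\Delta}(t),\bar{\eta}_{\Delta}(t-\tau))\,\Theta_{2}+R_{f}$, where $\Theta_{1},\Theta_{2}$ collect the two $\theta f_{\Delta}\Delta$ differences, the $\int f_{\Delta}\,ds$ term and the two iterated stochastic integrals built from $g_{1,\Delta}g_{\Delta}\Delta\hat B$ and $g_{2,\Delta}g_{\Delta}\Delta\hat B$. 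The summand $R_{f}$ is already controlled, and since $\mu(t)=\kappa(t)$ and $\bar{\eta}_{\Delta}$ agrees with $\eta_{\Delta}$ on the grid, Lemma~\ref{lemmaa33} gives $\mathbb{E}|f_{i}(\mu(t),\bar{\eta}_{\Delta}(t),\bar{\eta}_{\Delta}(t-\tau))|^{2\bar{q}}\le C$; hence, by H\"older, it suffices to show $\mathbb{E}|\Theta_{i}|^{2\bar{q}}\le C\Delta^{2\bar{q}}\alpha(\Delta)^{4\bar{q}}$ for $i=1,2$. By the interior bound (\ref{eq30}) the deterministic pieces of $\Theta_{i}$ are pointwise $\le C\Delta\alpha(\Delta)$, while for the stochastic integrals the Burkholder--Davis--Gundy and H\"older inequalities give
\begin{equation*}
	\mathbb{E}\Big|\int_{\kappa(t)}^{t}g_{1,\Delta}g_{\Delta}\Delta\hat B(s)\,dB(s)\Big|^{2\bar{q}}\le C\Delta^{\bar{q}-1}\int_{\kappa(t)}^{t}\alpha(\Delta)^{4\bar{q}}\,\mathbb{E}|\Delta\hat B(s)|^{2\bar{q}}\,ds\le C\Delta^{2\bar{q}}\alpha(\Delta)^{4\bar{q}},
\end{equation*}
because $\mathbb{E}|\Delta\hat B(s)|^{2\bar{q}}\le C\Delta^{\bar{q}}$; the $g_{2,\Delta}$ integral is handled the same way. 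Collecting these and absorbing the subordinate power $\Delta^{2\bar{q}}\alpha(\Delta)^{2\bar{q}}$ into $\Delta^{2\bar{q}}\alpha(\Delta)^{4\bar{q}}$ yields $\mathbb{E}|\Theta_{i}|^{2\bar{q}}\le C\Delta^{2\bar{q}}\alpha(\Delta)^{4\bar{q}}$, whence $\mathbb{E}|\bar{R}_{f}|^{\bar{q}}\le C\Delta^{\bar{q}}\alpha(\Delta)^{2\bar{q}}$, and $\bar{R}_{g}$ follows identically.

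The only mildly delicate point is the treatment of the iterated stochastic integrals inside $\Theta_{i}$: pinning down the correct $\Delta$–power requires using simultaneously the Burkholder--Davis--Gundy inequality, the interior bound (\ref{eq30}) on the truncated coefficients, and the martingale increment estimate $\mathbb{E}|\Delta\hat B(s)|^{r}\le C\Delta^{r/2}$; beyond that, the proof is bookkeeping, the one constraint to watch being that at every H\"older split the polynomial weight stays within the $p$–th moment bound of Lemma~\ref{le32}, which is guaranteed by $p\ge 2(1+\beta)q$ and $\bar{q}<q$.
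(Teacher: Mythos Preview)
Your proposal is correct and follows essentially the same route as the paper's proof: both bound $R_{f}$ (and $R_{g}$) by combining the integral Taylor remainder with the polynomial growth of the second derivatives (Assumption~\ref{a36}), a H\"older split with conjugate exponents $2,2$, Lemma~\ref{le32} for the $2\bar q(\beta+1)$ moment, and Lemma~\ref{le31} with $\bar p=4\bar q$ for the increment; and both handle $\bar R_{f}$ (and $\bar R_{g}$) via the decomposition~(\ref{eq33}), Lemma~\ref{lemmaa33} for $\mathbb{E}|f_i|^{2\bar q}$, the pointwise truncation bound~(\ref{eq30}) on the deterministic pieces, and Burkholder--Davis--Gundy plus $\mathbb{E}|\Delta\hat B(s)|^{2\bar q}\le C\Delta^{\bar q}$ on the iterated stochastic integrals. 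The only cosmetic difference is that you package the bracketed terms in~(\ref{eq33}) as $\Theta_1,\Theta_2$ and bound $\mathbb{E}|\Theta_i|^{2\bar q}$ in one stroke, whereas the paper expands and estimates each summand of $f_i\Theta_i$ separately.
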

\begin{proof}
	For any $\bar{q}\in [2,q)$, $\Delta\in (0, \Delta^{*})$ and $t\in [0,T]$, we get from H\"{o}lder's inequality, Lemma \ref{le32} and (\ref{eq32}) that
	\begin{equation*}
		\begin{split}
			&\mathbb{E}|R_{f}(t,\eta_{\Delta}(t),\bar{\eta}_{\Delta}(t) ,\eta_{\Delta}(t-\tau),\bar{\eta}_{\Delta}(t-\tau))|^{\bar{q}}
			\\&\leq\int_{0}^{1}(1-\vartheta)^{\bar{q}}\mathbb{E}\left|f''(\mu(t),x,y)\mid _{x=\bar{\eta}_{\Delta}(t)+\vartheta(\eta_{\Delta}(t)-\bar{\eta}_{\Delta}(t)),
				y=\bar{\eta}_{\Delta}(t-\tau)+\vartheta(\eta_{\Delta}(t-\tau)-\bar{\eta}_{\Delta}(t-\tau))}
			\right.\\&\quad\quad((0,\eta_{\Delta}(t)-\bar{\eta}_{\Delta}(t),\eta_{\Delta}(t-\tau)-\bar{\eta}_{\Delta}(t-\tau)),\\
			&\quad \quad\quad \left.(0,\eta_{\Delta}(t)-\bar{\eta}_{\Delta}(t),\eta_{\Delta}(t-\tau)-\bar{\eta}_{\Delta}(t-\tau)))\right|^{\bar{q}}d\vartheta
			\\&\leq\int_{0}^{1}\left[(\mathbb{E}|f_{11}(\mu(t),\bar{\eta}_{\Delta}(t)+\vartheta(\eta_{\Delta}(t)-\bar{\eta}_{\Delta}(t)),\bar{\eta}_{\Delta}(t-\tau)+\vartheta(\eta_{\Delta}(t-\tau)-\bar{\eta}_{\Delta}(t-\tau)))|^{2\bar{q}}\right.\\
			&\quad\quad\quad\cdot\mathbb{E}|\eta_{\Delta}(t)
			-\bar{\eta}_{\Delta}(t)|^{4\bar{q}})^{\frac{1}{2}}
			\\&\quad+(\mathbb{E}|f_{12}(\mu(t),\bar{\eta}_{\Delta}(t)+\vartheta(\eta_{\Delta}(t)-\bar{\eta}_{\Delta}(t)),\bar{\eta}_{\Delta}(t-\tau)+\vartheta(\eta_{\Delta}(t-\tau)-\bar{\eta}_{\Delta}(t-\tau)))|^{2\bar{q}}\\
			&\quad\quad\quad\cdot\mathbb{E}(|\eta_{\Delta}(t)
			-\bar{\eta}_{\Delta}(t)||\eta_{\Delta}(t-\tau)
			-\bar{\eta}_{\Delta}(t-\tau)|)^{2\bar{q}})^{\frac{1}{2}}
			\\&\quad+(\mathbb{E}|f_{21}(\mu(t),\bar{\eta}_{\Delta}(t)+\vartheta(\eta_{\Delta}(t)-\bar{\eta}_{\Delta}(t)),\bar{\eta}_{\Delta}(t-\tau)+\vartheta(\eta_{\Delta}(t-\tau)-\bar{\eta}_{\Delta}(t-\tau)))|^{2\bar{q}}\\
			&\quad\quad\quad\cdot\mathbb{E}(|\eta_{\Delta}(t)
			-\bar{\eta}_{\Delta}(t)||\eta_{\Delta}(t-\tau)
			-\bar{\eta}_{\Delta}(t-\tau)|)^{2\bar{q}})^{\frac{1}{2}}
			\\&\quad+(\mathbb{E}|f_{22}(\mu(t),\bar{\eta}_{\Delta}(t)+\vartheta(\eta_{\Delta}(t)-\bar{\eta}_{\Delta}(t)),\bar{\eta}_{\Delta}(t-\tau)+\vartheta(\eta_{\Delta}(t-\tau)-\bar{\eta}_{\Delta}(t-\tau)))|^{2\bar{q}}\\
			&\quad\quad\quad\left.\cdot\mathbb{E}|\eta_{\Delta}(t-\tau)
			-\bar{\eta}_{\Delta}(t-\tau)|^{4\bar{q}})^{\frac{1}{2}}\right]d\vartheta
			\\&\leq C\left(1+\sup _{0 < \Delta < \Delta^{*}}\sup _{0 \leq t \leq T}\mathbb{E}|\eta_{\Delta}(t)|^{2\bar{q}(1+\beta)}\right)^{\frac{1}{2}}\left(\Delta^{2\bar{q}}\alpha(\Delta)^{4\bar{q}}\right)^{\frac{1}{2}}
			\\&\leq C\Delta^{\bar{q}}\alpha(\Delta)^{2\bar{q}}.
		\end{split}
	\end{equation*}
	Then, by (\ref{eq33}), we have
	\begin{equation*}
		\begin{split}
			\mathbb{E}&|\bar{R}_{f}(t,\eta_{\Delta}(t),\bar{\eta}_{\Delta}(t) ,\eta_{\Delta}(t-\tau),\bar{\eta}_{\Delta}(t-\tau))|^{\bar{q}}
			\\&=C\Delta^{\bar{q}}\mathbb{E}|f_{1}(\mu(t),\bar{\eta}_{\Delta}(t),\bar{\eta}_{\Delta}(t
			-\tau))f_{\Delta}(t,\eta_{\Delta}(t),\eta_{\Delta}(t-\tau))|^{\bar{q}}
			\\&\quad+C\Delta^{\bar{q}}\mathbb{E}|f_{1}(\mu(t),\bar{\eta}_{\Delta}(t),\bar{\eta}_{\Delta}(t
			-\tau))f_{\Delta}(\mu(t),\bar{\eta}_{\Delta}(t),\bar{\eta}_{\Delta}(t
			-\tau))|^{\bar{q}}
			\\&\quad+C\mathbb{E}|f_{1}(\mu(t),\bar{\eta}_{\Delta}(t),\bar{\eta}_{\Delta}(t
			-\tau))g_{1,\Delta}(\mu(t),\bar{\eta}_{\Delta}(t),\bar{\eta}_{\Delta}(t
			-\tau))\\
			&\quad\quad \cdot g_{\Delta}(\mu(t),\bar{\eta}_{\Delta}(t),\bar{\eta}_{\Delta}(t
			-\tau))((\Delta\hat{B}(t))^{2}-\Delta)|^{\bar{q}}\\
			&\quad+C\mathbb{E}|f_{1}(\mu(t),\bar{\eta}_{\Delta}(t),\bar{\eta}_{\Delta}(t
			-\tau))g_{2,\Delta}(\mu(t),\bar{\eta}_{\Delta}(t),\bar{\eta}_{\Delta}(t
			-\tau))
			\\&\quad\quad\cdot  g_{\Delta}(\mu(t-\tau),\bar{\eta}_{\Delta}(t-\tau),\bar{\eta}_{\Delta}(t
			-2\tau))\int_{\kappa(t)}^{t}\Delta\hat{B}(s-\tau)dB(s)|^{\bar{q}}
			\\&\quad+C\Delta^{\bar{q}}\mathbb{E}|f_{2}(\mu(t),\bar{\eta}_{\Delta}(t),\bar{\eta}_{\Delta}(t
			-\tau))f_{\Delta}(t-\tau,\eta_{\Delta}(t-\tau),\eta_{\Delta}(t-2\tau))|^{\bar{q}}
			\\&\quad+C\Delta^{\bar{q}}\mathbb{E}|f_{2}(\mu(t),\bar{\eta}_{\Delta}(t),\bar{\eta}_{\Delta}(t
			-\tau))f_{\Delta}(\mu(t-\tau),\bar{\eta}_{\Delta}(t-\tau),\bar{\eta}_{\Delta}(t
			-2\tau))|^{\bar{q}}
			\\&\quad+C\mathbb{E}|f_{2}(\mu(t),\bar{\eta}_{\Delta}(t),\bar{\eta}_{\Delta}(t
			-\tau))g_{1,\Delta}(\mu(t),\bar{\eta}_{\Delta}(t),\bar{\eta}_{\Delta}(t
			-\tau))\\
			&\quad\quad\cdot  g_{\Delta}(\mu(t),\bar{\eta}_{\Delta}(t),\bar{\eta}_{\Delta}(t
			-\tau))((\Delta\hat{B}(t))^{2}-\Delta)|^{\bar{q}}
			\\&\quad+C\mathbb{E}|f_{2}(\mu(t),\bar{\eta}_{\Delta}(t),\bar{\eta}_{\Delta}(t
			-\tau))g_{2,\Delta}(\mu(t),\bar{\eta}_{\Delta}(t),\bar{\eta}_{\Delta}(t
			-\tau))
			\\&\quad\quad\cdot  g_{\Delta}(\mu(t-\tau),\bar{\eta}_{\Delta}(t-\tau),\bar{\eta}_{\Delta}(t
			-2\tau))\int_{\kappa(t)}^{t}\Delta\hat{B}(s-\tau)dB(s)|^{\bar{q}}
			\\&\quad+C\mathbb{E}|R_{f}(t,\eta_{\Delta}(t),\bar{\eta}_{\Delta}(t) ,\eta_{\Delta}(t-\tau),\bar{\eta}_{\Delta}(t-\tau))|^{\bar{q}}.
		\end{split}
	\end{equation*}
	Using H\"{o}lder's inequality and Lemma \ref{lemmaa33} gives the following estimates
	\begin{equation*}
		\mathbb{E}|(\Delta\hat{B}(t))^{2}-\Delta|^{2\bar{q}}\vee\mathbb{E}|\int_{\kappa(t)}^{t}\Delta\hat{B}(s-\tau)dB(s)|^{2\bar{q}}\leq C\Delta^{2\bar{q}},
	\end{equation*}
	\begin{equation*}
		\begin{split}
			&\mathbb{E}|f_{1}(\mu(t),\bar{\eta}_{\Delta}(t),\bar{\eta}_{\Delta}(t
			-\tau))f_{\Delta}(t,\eta_{\Delta}(t),\eta_{\Delta}(t
			-\tau))|^{\bar{q}}
			\\&\vee\mathbb{E}|f_{1}(\mu(t),\bar{\eta}_{\Delta}(t),\bar{\eta}_{\Delta}(t
			-\tau))f_{\Delta}(\mu(t),\bar{\eta}_{\Delta}(t),\bar{\eta}_{\Delta}(t
			-\tau))|^{\bar{q}}
			\\&\vee \mathbb{E}|f_{2}(\mu(t),\bar{\eta}_{\Delta}(t),\bar{\eta}_{\Delta}(t
			-\tau))f_{\Delta}(t-\tau,\eta_{\Delta}(t-\tau),\eta_{\Delta}(t
			-2\tau))|^{\bar{q}}
			\\&\vee\mathbb{E}|f_{2}(\mu(t),\bar{\eta}_{\Delta}(t),\bar{\eta}_{\Delta}(t
			-\tau))f_{\Delta}(\mu(t-\tau),\bar{\eta}_{\Delta}(t-\tau),\bar{\eta}_{\Delta}(t
			-2\tau))|^{\bar{q}}
			\\&\quad\leq C\alpha(\Delta)^{\bar{q}},
		\end{split}
	\end{equation*}
	\begin{equation*}
		\begin{split}
			&\mathbb{E}|f_{1}(\mu(t),\bar{\eta}_{\Delta}(t),\bar{\eta}_{\Delta}(t
			-\tau))g_{1,\Delta}(\mu(t),\bar{\eta}_{\Delta}(t),\bar{\eta}_{\Delta}(t
			-\tau))g_{\Delta}(\mu(t),\bar{\eta}_{\Delta}(t),\bar{\eta}_{\Delta}(t
			-\tau))|^{2\bar{q}}
			\\&\vee\mathbb{E}|f_{1}(\mu(t),\bar{\eta}_{\Delta}(t),\bar{\eta}_{\Delta}(t
			-\tau))g_{2,\Delta}(\mu(t),\bar{\eta}_{\Delta}(t),\bar{\eta}_{\Delta}(t
			-\tau))g_{\Delta}(\mu(t-\tau),\bar{\eta}_{\Delta}(t-\tau),\bar{\eta}_{\Delta}(t
			-2\tau))|^{2\bar{q}}
			\\&\vee \mathbb{E}|f_{2}(\mu(t),\bar{\eta}_{\Delta}(t),\bar{\eta}_{\Delta}(t
			-\tau))g_{1,\Delta}(\mu(t),\bar{\eta}_{\Delta}(t),\bar{\eta}_{\Delta}(t
			-\tau))g_{\Delta}(\mu(t),\bar{\eta}_{\Delta}(t),\bar{\eta}_{\Delta}(t
			-\tau))|^{2\bar{q}}
			\\&\vee\mathbb{E}|f_{2}(\mu(t),\bar{\eta}_{\Delta}(t),\bar{\eta}_{\Delta}(t
			-\tau))g_{2,\Delta}(\mu(t),\bar{\eta}_{\Delta}(t),\bar{\eta}_{\Delta}(t
			-\tau))g_{\Delta}(\mu(t-\tau),\bar{\eta}_{\Delta}(t-\tau),\bar{\eta}_{\Delta}(t
			-2\tau))|^{2\bar{q}}
			\\&\quad\leq C\alpha(\Delta)^{4\bar{q}}.
		\end{split}
	\end{equation*}
	Combining these inequalities together with H\"{o}lder's inequality gives that
	\begin{equation*}
		\mathbb{E}|\bar{R}_{f}(t,\eta_{\Delta}(t),\bar{\eta}_{\Delta}(t) ,\eta_{\Delta}(t-\tau),\bar{\eta}_{\Delta}(t-\tau))|^{\bar{q}}
		\\\leq C\Delta^{\bar{q}}\alpha(\Delta)^{2\bar{q}}.
	\end{equation*}
	Similarly, the other results could be obtained. The proof is complete.
\end{proof}

\begin{lem}\label{le35}
	Let Assumptions  \ref{a31}-\ref{a33} hold. 
	Let $L>||\xi||$ be any real number,
	and  define 
	\begin{equation*}
		\lambda_{L}=inf\{t\geq0:|x(t)|\geq L\},~~~\lambda_{\Delta,L}=inf\{t\geq0:|\eta_{\Delta}(t)|\geq L\}.
	\end{equation*}
	Then, we derive that
	\begin{equation*}
		\mathbb{P}(\lambda_{L}\leq T)\leq\frac{C}{L^{p}},~~~
		\mathbb{P}(\lambda_{\Delta,L}\leq T)\leq\frac{C}{L^{p}}.
	\end{equation*}
\end{lem}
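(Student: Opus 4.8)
The plan is to obtain both estimates from the uniform $p$th moment bounds already established — Lemma \ref{le1} for the exact solution and Lemma \ref{le32} for the numerical one — via a stopped-process argument followed by Chebyshev's inequality. Throughout, $p$ denotes the exponent appearing in Assumption \ref{a33}.

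\emph{The exact solution.} Apply It\^o's formula to $|x(t\wedge\lambda_L)|^p$. The It\^o term produces $p|x(s)|^{p-2}\big(x(s)^Tf(s,x(s),x(s-\tau))+\tfrac{p-1}{2}|g(s,x(s),x(s-\tau))|^2\big)$; since $\tfrac{p-1}{2}\le p-1$, Assumption \ref{a33} bounds this by $pK_2|x(s)|^{p-2}(1+|x(s)|^2+|x(s-\tau)|^2)$, and Young's inequality turns the latter into $C(1+|x(s)|^p+|x(s-\tau)|^p)$. The stochastic integral, stopped at $\lambda_L$, has a bounded integrand: for $s<\lambda_L$ one has $|x(s)|<L$, and $|x(s-\tau)|<L$ as well because $L>\|\xi\|$ covers $s\in[0,\tau]$ while $s-\tau<\lambda_L$ covers $s>\tau$; hence it is a true martingale and drops out after taking expectations. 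Using Lemma \ref{le1} — splitting $\int_0^{t\wedge\lambda_L}\mathbb{E}|x(s-\tau)|^p\,ds$ at $s=\tau$ and using $|\xi|\le\|\xi\|$ on the first piece — one gets $\mathbb{E}|x(t\wedge\lambda_L)|^p\le C$ with $C$ independent of $L$ and of $t\in[0,T]$. By continuity of $x(\cdot)$, $|x(\lambda_L)|=L$ on $\{\lambda_L\le T\}$; taking $t=T$ yields $L^p\mathbb{P}(\lambda_L\le T)\le\mathbb{E}|x(T\wedge\lambda_L)|^p\le C$.

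\emph{The numerical solution.} Apply It\^o's formula to $|Z_\Delta(t\wedge\lambda_{\Delta,L})|^p$ using representation (\ref{eq25}), and estimate the three resulting integrals exactly as the terms $I_1,I_2,I_3$ in the proof of Lemma \ref{le32}: Lemma \ref{le2} for the Khasminskii-type part, the bound (\ref{eq30}) together with (\ref{eq001}) for the Milstein correction terms, and Lemma \ref{le31} for the one-step increments $Z_\Delta-\bar Z_\Delta$. Here the relevant stochastic integrals are genuine martingales with no extra work, since all truncated coefficients are bounded by the fixed constant $\alpha(\Delta)$. Collecting the bounds yields
\begin{equation*}
\mathbb{E}|Z_\Delta(t\wedge\lambda_{\Delta,L})|^p\le C+C\int_0^t\mathbb{E}|Z_\Delta(s\wedge\lambda_{\Delta,L})|^p\,ds+C\,\mathbb{E}\int_0^t\big(|\bar\eta_\Delta(s)|^p+|\bar\eta_\Delta(s-\tau)|^p\big)\,ds,
\end{equation*}
and, because $\bar\eta_\Delta$ takes grid values of $\eta_\Delta$, Lemma \ref{le32} makes the last term $\le C$ uniformly in $\Delta$ and $L$; Gronwall's inequality then gives $\mathbb{E}|Z_\Delta(t\wedge\lambda_{\Delta,L})|^p\le C$. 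Passing back to $\eta_\Delta$ via (\ref{eq22}), the elementary inequality $|u-v|^p\ge 2^{1-p}|u|^p-|v|^p$, the bound (\ref{eq30}), and $\Delta\alpha(\Delta)=\Delta^{3/4}\cdot\Delta^{1/4}\alpha(\Delta)\le K_0$, one obtains $\mathbb{E}|\eta_\Delta(t\wedge\lambda_{\Delta,L})|^p\le C$ uniformly. Finally, $\eta_\Delta(\cdot)$ is continuous in $t$ (it solves the implicit continuous form (\ref{eq21}) with the continuous coefficient $f_\Delta$), so $|\eta_\Delta(\lambda_{\Delta,L})|=L$ on $\{\lambda_{\Delta,L}\le T\}$, and taking $t=T$ gives $L^p\mathbb{P}(\lambda_{\Delta,L}\le T)\le C$.

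The argument is conceptually routine; the points needing care are verifying that the stopped integrals are true martingales (for the exact solution this is exactly where $L>\|\xi\|$ enters, for the numerical one it follows from the a priori boundedness of the truncated coefficients), confirming the path continuity of $\eta_\Delta$ so that the exit value equals $L$, and tracking that every constant is independent of both $L$ and $\Delta$. I expect the numerical part to be the more laborious, since it re-runs the full estimate of Lemma \ref{le32} with the Milstein corrections present, but it needs no idea beyond that proof.
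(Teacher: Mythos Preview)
Your proposal is correct and follows exactly the route the paper indicates: the paper does not give a detailed proof but simply states that the result ``can be proved by borrowing the proof techniques in Lemma \ref{le1} and Lemma \ref{le32},'' and you have spelled out precisely that stopped It\^o/Gronwall/Chebyshev argument. Your added remarks on why the stopped stochastic integrals are true martingales and why $\eta_\Delta$ is continuous fill in details the paper leaves implicit but do not depart from its intended method.
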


Lemma \ref{le35} can be proved by borrowing the proof techniques in Lemma \ref{le1} and Lemma \ref{le32}.

\begin{lem}\label{le336}
	Let Assumptions  \ref{a31}-\ref{a36} hold with $p\geq2(1+\beta)q$ . 
	Let $L>||\xi||$ be any real number,
	and  suppose that there exists a sufficiently small $\Delta\in (0, \Delta^{*})$ satisfying $\Lambda^{-1}(\alpha(\Delta))\geq L$. Then we have
	\begin{equation*}
		\mathbb{E}|x(T\wedge\varrho_{\Delta,L})-\eta_{\Delta}(T\wedge\varrho_{\Delta,L})|^{2}\leq C(\Delta^{2}\alpha(\Delta)^{4}+\Delta^{2\gamma}+\Delta^{2\sigma}),
	\end{equation*}
	where $\varrho_{\Delta,L}:=\lambda_{L}\wedge\lambda_{\Delta,L}$.
\end{lem}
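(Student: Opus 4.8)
The plan exploits the fact that, by the choice of $\Delta$, the stopping time $\varrho_{\Delta,L}$ switches off the truncation, so that the assertion reduces to a local strong‑error estimate for the $\theta$‑Milstein scheme under the one‑sided Assumption \ref{a32}. \textbf{Step 1 (reduction; initial segment).} Since $L>\|\xi\|$ and $\Lambda^{-1}(\alpha(\Delta))\ge L$, on $\{s<\varrho_{\Delta,L}\}$ all of $x(s),\eta_\Delta(s),\bar\eta_\Delta(s)$ and their $\tau$‑shifts have modulus $\le L\le\Lambda^{-1}(\alpha(\Delta))$, so $\pi_\Delta$ acts as the identity and $f_\Delta,g_\Delta,g_{1,\Delta},g_{2,\Delta}$ may be replaced by $f,g,g_1,g_2$ throughout the computation on $[0,\varrho_{\Delta,L}]$. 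On $[-\tau,0]$ one has $x(t)-\eta_\Delta(t)=\xi(t)-\xi(\kappa(t))$, so Assumption \ref{a35} gives $\sup_{-\tau\le t\le0}\mathbb{E}|x(t)-\eta_\Delta(t)|^{2}\le K_4^{2}\Delta^{2\gamma}$, the origin of the $\Delta^{2\gamma}$ term. Writing $e(t)=x(t)-\eta_\Delta(t)$, I work with $D(t):=x(t)-Z_\Delta(t)$, a difference of two genuine It\^o processes (unlike $e$, which carries the implicit term), using $D=e+\theta\Delta f_\Delta(t,\eta_\Delta(t),\eta_\Delta(t-\tau))$ and $\mathbb{E}|\theta\Delta f_\Delta(t,\eta_\Delta(t),\eta_\Delta(t-\tau))|^{2}\le C\Delta^{2}$ (Assumption \ref{a31}, Lemma \ref{le32}) to pass between $\mathbb{E}|e|^{2}$ and $\mathbb{E}|D|^{2}$; likewise $|x(0)-Z_\Delta(0)|^{2}=\theta^{2}\Delta^{2}|f_\Delta(0,\xi(0),\xi(-\tau))|^{2}\le C\Delta^{2}$. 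I keep in hand the identities $\eta_\Delta-Z_\Delta=\theta\Delta f_\Delta(\cdot,\eta_\Delta,\eta_\Delta(\cdot-\tau))$, $\bar\eta_\Delta-\bar Z_\Delta=\theta\Delta f_\Delta(\mu(\cdot),\bar\eta_\Delta,\bar\eta_\Delta(\cdot-\tau))$ and the representation \eqref{eq25} of $Z_\Delta-\bar Z_\Delta$; with Lemma \ref{le31} these control every ``staircase'' difference in $\mathcal{L}^{\bar p}$ by $C\Delta^{1/2}\alpha(\Delta)$.

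\textbf{Step 2 (It\^o's formula; the Milstein cancellation).} Apply It\^o's formula to $|D(s)|^{2}$ on $[0,t\wedge\varrho_{\Delta,L}]$ using \eqref{sdde1} and \eqref{eq21}; taking expectations kills the martingale part and yields $\mathbb{E}|D(t\wedge\varrho_{\Delta,L})|^{2}=|D(0)|^{2}+\mathbb{E}\int_{0}^{t\wedge\varrho_{\Delta,L}}(2D\,A+|\Gamma|^{2})\,ds$, where $A(s)=f(s,x(s),x(s-\tau))-f_\Delta(\mu(s),\bar\eta_\Delta(s),\bar\eta_\Delta(s-\tau))$ and $\Gamma(s)$ is $g(s,x(s),x(s-\tau))-g_\Delta(\mu(s),\bar\eta_\Delta(s),\bar\eta_\Delta(s-\tau))$ minus the two Milstein integrands in \eqref{eq21}. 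The crucial observation is that on each cell $[\kappa(s),s)$ the quantities $\mu(\cdot),\bar\eta_\Delta(\cdot),\bar\eta_\Delta(\cdot-\tau)$ are constant, so $\int_{\kappa(s)}^{s}g_\Delta(\mu(r),\bar\eta_\Delta(r),\bar\eta_\Delta(r-\tau))\,dB(r)=g_\Delta(\mu(s),\bar\eta_\Delta(s),\bar\eta_\Delta(s-\tau))\,\Delta\hat B(s)$ (and similarly for the delayed integral); hence the two Milstein integrands are \emph{exactly} the first‑order Taylor terms occurring in \eqref{eqeq07} for $g(\mu(s),\eta_\Delta(s),\eta_\Delta(s-\tau))-g(\mu(s),\bar\eta_\Delta(s),\bar\eta_\Delta(s-\tau))$. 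Substituting \eqref{eqeq07} therefore collapses $\Gamma(s)$ to $g(s,x(s),x(s-\tau))-g(\mu(s),\eta_\Delta(s),\eta_\Delta(s-\tau))+\bar R_g$ and, applying \eqref{eqeq07} to $f$, rewrites $A(s)$ as $f(s,x(s),x(s-\tau))-f(\mu(s),\eta_\Delta(s),\eta_\Delta(s-\tau))+(\text{first-order Taylor of }f)+\bar R_f$, with $\mathbb{E}|\bar R_f|^{2}\vee\mathbb{E}|\bar R_g|^{2}\le C\Delta^{2}\alpha(\Delta)^{4}$ by Lemma \ref{le34}. After peeling off $D=e+\theta\Delta f_\Delta(\cdot,\eta_\Delta,\eta_\Delta(\cdot-\tau))$ and replacing the time argument $s$ by $\mu(s)$ in $f,g$ at a cost $\le C\Delta^{2\sigma}$ (Assumption \ref{a34}, Lemmas \ref{le1}, \ref{le32}), the leading part of $2D\,A+|\Gamma|^{2}$ is $2e(s)\big(f(\mu(s),x(s),x(s-\tau))-f(\mu(s),\eta_\Delta(s),\eta_\Delta(s-\tau))\big)+(1+\varepsilon)\big|g(\mu(s),x(s),x(s-\tau))-g(\mu(s),\eta_\Delta(s),\eta_\Delta(s-\tau))\big|^{2}$; since $q>2$, so that $(1+\varepsilon)/2<q-1$ for $\varepsilon$ small, Assumption \ref{a32} bounds this by $2K_{1}\big(|e(s)|^{2}+|e(s-\tau)|^{2}\big)-2U(x(s),\eta_\Delta(s))+2U(x(s-\tau),\eta_\Delta(s-\tau))$, and the constant $2K_1$ does not depend on $L$.

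\textbf{Step 3 (residual terms; telescoping of $U$; Gronwall).} What remains are: (i) the first‑order Taylor ``drift'' terms $f_i(\mu(s),\bar\eta_\Delta(s),\bar\eta_\Delta(s-\tau))\int_{\kappa(s)}^{s}g_\Delta\,dB$ paired with $D(s)$ — writing $D(s)=D(\kappa(s))+(D(s)-D(\kappa(s)))$, the $\mathcal{F}_{\kappa(s)}$‑measurable part has zero conditional expectation against $\int_{\kappa(s)}^{s}dB$, and the increment $D(s)-D(\kappa(s))$, treated via the conditional It\^o isometry together with H\"{o}lder's inequality, Lemmas \ref{le31}, \ref{lemmaa33} and the self‑referential form of the Gronwall estimate, produces only terms absorbed into $\varepsilon\sup\mathbb{E}|e|^{2}+C(\Delta^{2}\alpha(\Delta)^{4}+\Delta^{2\sigma})$; (ii) the $\theta\Delta f_\Delta(\cdot,\eta_\Delta,\eta_\Delta(\cdot-\tau))$ pieces and the staircase differences $\eta_\Delta-\bar\eta_\Delta$, $Z_\Delta-\bar Z_\Delta$ entering through the polynomial‑Lipschitz bounds of Assumptions \ref{a31}, \ref{a36}, controlled by Cauchy--Schwarz with the moment bounds of Lemmas \ref{le32}, \ref{lemmaa33} and the $\mathcal{L}^{\bar p}$‑estimates of Lemma \ref{le31}; and (iii) the two $U$‑terms, for which Fubini and the substitution $s\mapsto s+\tau$ give $\mathbb{E}\int_{0}^{t\wedge\varrho_{\Delta,L}}\big(-U(x(s),\eta_\Delta(s))+U(x(s-\tau),\eta_\Delta(s-\tau))\big)\,ds\le\mathbb{E}\int_{-\tau}^{0}U(\xi(s),\xi(\kappa(s)))\,ds$, and since $|\xi(s)|\vee|\xi(\kappa(s))|\le\|\xi\|$ the defining inequality \eqref{zs1} of $\bar{\mathcal U}$ together with Assumption \ref{a35} gives $U(\xi(s),\xi(\kappa(s)))\le\rho_{\|\xi\|}K_4^{2}\Delta^{2\gamma}$, so this term is $\le C\Delta^{2\gamma}$. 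Collecting everything yields $\mathbb{E}|e(t\wedge\varrho_{\Delta,L})|^{2}\le C(\Delta^{2}\alpha(\Delta)^{4}+\Delta^{2\gamma}+\Delta^{2\sigma})+C\int_{0}^{t}\sup_{-\tau\le r\le s}\mathbb{E}|e(r\wedge\varrho_{\Delta,L})|^{2}\,ds$, and Gronwall's inequality, with the $[-\tau,0]$ bound of Step 1, gives the assertion.

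\textbf{Main obstacle.} The genuinely delicate part is Step 3(i)--(ii): the many ``staircase'' contributions must be organized so that each enters at order $\Delta^{2}\alpha(\Delta)^{4}$ (equivalently, order $\Delta$ after using $\Delta^{1/4}\alpha(\Delta)\le K_0$ from \eqref{eq001}) rather than at the cruder order $\Delta\alpha(\Delta)^{2}$ that a plain application of Young's inequality would produce; this is precisely what forces the conditional‑expectation cancellation for the first‑order Taylor drift terms, the sharp $\mathcal{L}^{\bar p}$ bounds of Lemma \ref{le31}, and the remainder estimate of Lemma \ref{le34}, and what makes the explicit appearance of $U$ in Assumption \ref{a32} (with its telescoping slack and its $L$‑free constant $K_1$) indispensable for keeping the Gronwall exponent independent of $L$.
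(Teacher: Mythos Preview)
Your proposal is correct and follows essentially the same route as the paper's proof: both work with $\Upsilon_\Delta(t)=x(t)-Z_\Delta(t)$ (your $D$), apply It\^o's formula, use the Taylor expansion \eqref{eqeq07} so that the Milstein integrands cancel against the first-order terms of $g(\mu(s),\eta_\Delta,\eta_{\Delta,\tau})-g(\mu(s),\bar\eta_\Delta,\bar\eta_{\Delta,\tau})$ leaving only $\bar R_g$, invoke Assumption~\ref{a32} for the leading one-sided piece, telescope the $U$-terms back to the initial segment (producing $C\Delta^{2\gamma}$ via \eqref{zs1} and Assumption~\ref{a35}), control the first-order Taylor drift terms $f_i(\cdot)\int_{\kappa(s)}^{s}g_\Delta\,dB$ by a conditional-expectation/increment splitting (this is exactly what the paper hides behind ``Similar to \cite{42}'' in the estimate of $J_{13}$), and finish with Gronwall. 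Your write-up is in fact more explicit than the paper's on the two genuinely delicate points---the order-$\Delta^{2}\alpha(\Delta)^{4}$ bookkeeping for the staircase cross terms and the $L$-independence of the Gronwall constant coming from $K_1$---but the decomposition, the key lemmas (Lemmas~\ref{le31}, \ref{lemmaa33}, \ref{le34}), and the order of operations match.
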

\begin{proof}
	Denote $\varrho_{\Delta,L}=\varrho$ for simplicity.
	Denote $\Upsilon_{\Delta}(t)=x(t)-Z_{\Delta}(t)$.
	For $0\leq s\leq t\wedge\varrho$, one can see that
	\begin{equation*}
		|x(s)|\vee|x(s-\tau)|\vee|\bar{\eta}_{\Delta}(s)|\vee|\bar{\eta}_{\Delta}(s-\tau)|\leq L\leq \Lambda^{-1}(\alpha(\Delta)).
	\end{equation*}
In addition, for $\tau\leq s\leq t\wedge\varrho$,
$|\bar{\eta}_{\Delta}(s-2\tau)|\leq L\leq \Lambda^{-1}(\alpha(\Delta)).$ Then from the previous definitions, we get that
	\begin{equation*}
		\begin{split}
			f_{\Delta}(\mu(s),\bar{\eta}_{\Delta}(s),\bar{\eta}_{\Delta}(s
			-\tau))=&f(\mu(s),\bar{\eta}_{\Delta}(s),\bar{\eta}_{\Delta}(s
			-\tau)),\\
			g_{\Delta}(\mu(s),\bar{\eta}_{\Delta}(s),\bar{\eta}_{\Delta}(s
			-\tau))=&g(\mu(s),\bar{\eta}_{\Delta}(s),\bar{\eta}_{\Delta}(s
			-\tau)),\\
			g_{1,\Delta}(\mu(s),\bar{\eta}_{\Delta}(s),\bar{\eta}_{\Delta}(s
			-\tau))=&g_{1}(\mu(s),\bar{\eta}_{\Delta}(s),\bar{\eta}_{\Delta}(s
			-\tau)),\\
			g_{2,\Delta}(\mu(s),\bar{\eta}_{\Delta}(s),\bar{\eta}_{\Delta}(s
			-\tau))=&g_{2}(\mu(s),\bar{\eta}_{\Delta}(s),\bar{\eta}_{\Delta}(s
			-\tau)),\\
			g_{\Delta}(\mu(s-\tau),\bar{\eta}_{\Delta}(s-\tau),\bar{\eta}_{\Delta}(s
			-2\tau))=&g(\mu(s-\tau),\bar{\eta}_{\Delta}(s-\tau),\bar{\eta}_{\Delta}(s
			-2\tau)).
		\end{split}
	\end{equation*}
	Applying It\^{o}'s formula and (\ref{eqeq07}) yields that
	\begin{equation*}
		\begin{split}
			&\mathbb{E}|\Upsilon_{\Delta}(t\wedge\varrho)|^{2}\\
			\leq&\theta^{2}|f_{\Delta}(0,\xi(0),\xi(-\tau))|^{2}\Delta^{2}
			\\&+\mathbb{E}\int_{0}^{t\wedge\varrho}2[\Upsilon_{\Delta}^{T}(s)\left(f(s,x(s),x(s
			-\tau))-f_{\Delta}(\mu(s),\bar{\eta}_{\Delta}(s),\bar{\eta}_{\Delta}(s
			-\tau))\right)
			\\&+\frac{1}{2}|g(s,x(s),x(s
			-\tau))-g_{\Delta}(\mu(s),\bar{\eta}_{\Delta}(s),\bar{\eta}_{\Delta}(s
			-\tau))
			\\&-g_{1,\Delta}(\mu(s),\bar{\eta}_{\Delta}(s),\bar{\eta}_{\Delta}(s
			-\tau))g_{\Delta}(\mu(s),\bar{\eta}_{\Delta}(s),\bar{\eta}_{\Delta}(s
			-\tau))\Delta \hat{B}(s)
			\\&-g_{2,\Delta}(\mu(s),\bar{\eta}_{\Delta}(s),\bar{\eta}_{\Delta}(s
			-\tau))g_{\Delta}(\mu(s-\tau),\bar{\eta}_{\Delta}(s-\tau),\bar{\eta}_{\Delta}(s
			-2\tau))\Delta \hat{B}(s-\tau)|^{2}]ds
		\end{split}
	\end{equation*}
			\begin{equation*}
				\begin{split}
			\leq& C\Delta^{2}\alpha(\Delta)^{2}\\
			&+C\mathbb{E}\int_{0}^{t\wedge\varrho}\left[( x(s)-\eta_{\Delta}(s))^T(f(s,x(s),x(s
			-\tau))-f(\mu(s),\bar{\eta}_{\Delta}(s),\bar{\eta}_{\Delta}(s
			-\tau)))\right.
			\\&+\left.|g(s,x(s),x(s
			-\tau))-g(\mu(s),\eta_{\Delta}(s),\eta_{\Delta}(s
			-\tau))|^{2}\right]ds
			\\&+C\mathbb{E}\int_{0}^{t\wedge\varrho}|\bar{R}_{g}(s,\eta_{\Delta}(s),\bar{\eta}_{\Delta}(s) ,\eta_{\Delta}(s-\tau),\bar{\eta}_{\Delta}(s-\tau))|^{2}ds
			\\&+C\mathbb{E}\int_{0}^{t\wedge\varrho}\theta\Delta f_{\Delta}^T(s,\eta_{\Delta}(s),\eta_{\Delta}(s-\tau))\\
			&\quad\cdot (f(s,x(s),x(s
			-\tau))-f(\mu(s),\bar{\eta}_{\Delta}(s),\bar{\eta}_{\Delta}(s
			-\tau))) ds
			\\=:& C\Delta^{2}\alpha(\Delta)^{2}+J_{1}+J_{2}+J_{3}.
		\end{split}
	\end{equation*}
	Using Young's inequality gives that
	\begin{equation}
		\begin{split}
			J_{1}\leq&C\mathbb{E}\int_{0}^{t\wedge\varrho}\left[( x(s)-\eta_{\Delta}(s))^T(f(s,x(s),x(s
			-\tau))-f(s,\eta_{\Delta}(s),\eta_{\Delta}(s
			-\tau)))\right.
			\\&\left.+(q-1)|g(s,x(s),x(s
			-\tau))-g(s,\eta_{\Delta}(s),\eta_{\Delta}(s
			-\tau))|^{2}\right]ds
			\\& +C\mathbb{E}\int_{0}^{t\wedge\varrho}( x(s)-\eta_{\Delta}(s))^T(f(s,\eta_{\Delta}(s),\eta_{\Delta}(s
			-\tau))-f(\mu(s),\eta_{\Delta}(s),\eta_{\Delta}(s
			-\tau))) ds
			\\& +C\mathbb{E}\int_{0}^{t\wedge\varrho}( x(s)-\eta_{\Delta}(s))^T(f(\mu(s),\eta_{\Delta}(s),\eta_{\Delta}(s
			-\tau))-f(\mu(s),\bar{\eta}_{\Delta}(s),\bar{\eta}_{\Delta}(s
			-\tau))) ds
			\\&+C\mathbb{E}\int_{0}^{t\wedge\varrho}\frac{q-1}{q-2}|g(s,\eta_{\Delta}(s),\eta_{\Delta}(s
			-\tau))-g(\mu(s),\eta_{\Delta}(s),\eta_{\Delta}(s
			-\tau))|^{2}ds
			\\=:&J_{11}+J_{12}+J_{13}+J_{14}.
		\end{split}
	\end{equation}
	By Assumption \ref{a32}, we have
	\begin{equation*}
		\begin{split}
			J_{11}\leq&C\mathbb{E}\int_{0}^{t\wedge\varrho}(|x(s)-\eta_{\Delta}(s)|^{2}+|x(s-\tau)-\eta_{\Delta}(s-\tau)|^{2}\\
			&-U(x(s),\eta_{\Delta}(s))
			+U(x(s-\tau),\eta_{\Delta}(s-\tau)))ds
			\\\leq&C\mathbb{E}\int_{0}^{t\wedge\varrho}|x(s)-\eta_{\Delta}(s)|^{2}+C\mathbb{E}\int_{-\tau}^{0}|\xi(s)-\xi(\kappa(s))|^{2}ds
			\\&+C\mathbb{E}\int_{0}^{t\wedge\varrho}\left(-U(x(s),\eta_{\Delta}(s))
			+U(x(s-\tau),\eta_{\Delta}(s-\tau))\right)ds
			\\\leq&C\mathbb{E}\int_{0}^{t\wedge\varrho}|x(s)-\eta_{\Delta}(s)|^{2}ds+C \Delta^{2\gamma}+C\int_{-\tau}^{0}U(\xi(s),\xi(\kappa(s)))ds
			\\\leq&C\mathbb{E}\int_{0}^{t\wedge\varrho}|x(s)-\eta_{\Delta}(s)|^{2}ds+C \Delta^{2\gamma}+C\int_{-\tau}^{0}\rho_{\varsigma}|\xi(s)-\xi(\kappa(s))|^{2}ds
			\\\leq&C\int_{0}^{t}\mathbb{E}|x(s\wedge\varrho)-\eta_{\Delta}(s\wedge\varrho)|^{2}ds+C \Delta^{2\gamma}.
		\end{split}
	\end{equation*}
By Assumption \ref{a34}, one can see that
	\begin{equation*}
		\begin{split}
			J_{12}\leq&C\int_{0}^{t}\mathbb{E}|x(s\wedge\varrho)-\eta_{\Delta}(s\wedge\varrho)|^{2}ds\\
			&+C\mathbb{E}\int_{0}^{t\wedge\varrho}(1+|\eta_{\Delta}(s)|^{2(\beta+1)}+|\eta_{\Delta}(s-\tau)|^{2(\beta+1)})|s-\kappa(s)|^{2\sigma}ds
			\\\leq&C\int_{0}^{t}\mathbb{E}|x(s\wedge\varrho)-\eta_{\Delta}(s\wedge\varrho)|^{2}ds
			+C\Delta^{2\sigma}.
		\end{split}
	\end{equation*}
	Similar to \cite{42}, we have
	\begin{equation*}
		\begin{split}
			J_{13}\leq&C\mathbb{E}\int_{0}^{t\wedge\varrho} (x(s)-\eta_{\Delta}(s))^T\\
			&\quad\cdot [f_{1}(\mu(s),\bar{\eta}_{\Delta}(s),\bar{\eta}_{\Delta}(s
			-\tau))\int_{\mu(s)}^{s}g_{\Delta}(\mu(r),\bar{\eta}_{\Delta}(r),\bar{\eta}_{\Delta}(r
			-\tau))dB(r)
			\\&+f_{2}(\mu(s),\bar{\eta}_{\Delta}(s),\bar{\eta}_{\Delta}(s
			-\tau))\int_{\mu(s)-\tau}^{s-\tau}g_{\Delta}(\mu(r),\bar{\eta}_{\Delta}(r),\bar{\eta}_{\Delta}(r
			-\tau))dB(r)
			\\&+\bar{R}_{f}(s,\eta_{\Delta}(s),\bar{\eta}_{\Delta}(s) ,\eta_{\Delta}(s-\tau),\bar{\eta}_{\Delta}(s-\tau))] ds
			\\\leq&C\mathbb{E}\int_{0}^{t\wedge\varrho}\left(| x(s)-\eta_{\Delta}(s)|^{2}+|\bar{R}_{f}(s,\eta_{\Delta}(s),\bar{\eta}_{\Delta}(s) ,\eta_{\Delta}(s-\tau),\bar{\eta}_{\Delta}(s-\tau))|^{2}\right)ds\\
			&+C\Delta^{2}\alpha(\Delta)^{4}+C\Delta^{2\gamma}
			\\\leq&C\int_{0}^{t}\mathbb{E}|x(s\wedge\varrho)-\eta_{\Delta}(s\wedge\varrho)|^{2}ds+C\Delta^{2}\alpha(\Delta)^{4}+C\Delta^{2\gamma}.
		\end{split}
	\end{equation*}
	Borrowing the technique in the estimation of $J_{12}$ gives that
	\begin{equation*}
		J_{14}\leq C\Delta^{2\sigma}.
	\end{equation*}
	Thus,
	\begin{equation}\label{jjj1}
		J_{1}\leq C\int_{0}^{t}\mathbb{E}|x(s\wedge\varrho)-\eta_{\Delta}(s\wedge\varrho)|^{2}ds+C\Delta^{2\gamma}+C\Delta^{2\sigma}+C\Delta^{2}\alpha(\Delta)^{4}.
	\end{equation}
	By Lemma \ref{le34}, we derive that
	\begin{equation}\label{jjj2}
		J_{2}\leq C\Delta^{2}\alpha(\Delta)^{4}.
	\end{equation}
	Similar to $J_{1}$, using Young's inequality gives that
	\begin{equation}\label{jjj3}
		J_{3}\leq C\int_{0}^{t}\mathbb{E}|x(s\wedge\varrho)-\eta_{\Delta}(s\wedge\varrho)|^{2}ds+C\Delta^{2\gamma}+C\Delta^{2\sigma}+C\Delta^{2}\alpha(\Delta)^{4}.
	\end{equation}
	Combining (\ref{jjj1}) - (\ref{jjj3}) together yields that
	\begin{equation*}
		\mathbb{E}|\Upsilon_{\Delta}(t\wedge\varrho)|^{2}\leq C\int_{0}^{t}\mathbb{E}|x(s\wedge\varrho)-\eta_{\Delta}(s\wedge\varrho)|^{2}ds+C\Delta^{2\gamma}+C\Delta^{2\sigma}+C\Delta^{2}\alpha(\Delta)^{4}.
	\end{equation*}
	Thus,
	\begin{equation*}
		\begin{split}
			&\mathbb{E}|x(t\wedge\varrho)-\eta_{\Delta}(t\wedge\varrho)|^{2}\\ \leq&C\left(\Delta^{2}\alpha(\Delta)^{2}+\mathbb{E}|\Upsilon_{\Delta}(t\wedge\varrho)|^{2}\right)
			\\\leq & C\left(\int_{0}^{t}\mathbb{E}|x(s\wedge\varrho)-\eta_{\Delta}(s\wedge\varrho)|^{2}ds+\Delta^{2\gamma}+\Delta^{2\sigma}+\Delta^{2}\alpha(\Delta)^{4}\right).
		\end{split}
	\end{equation*}
	Thanks to the Gronwall inequality, the  result follows. The proof is complete.
\end{proof}

\begin{thm}\label{theo37}
	Let Assumptions  \ref{a31}-\ref{a36} hold with $p\geq2(1+\beta)q$. For sufficiently small $\Delta\in (0, \Delta^{*})$, assume that 
	\begin{equation}\label{ttc1}
		\alpha(\Delta)\geq\Lambda\left((\Delta^{2}\alpha(\Delta)^{4}\vee\Delta^{2(\gamma\wedge\sigma)})^{\frac{-1}{p-2}}\right).
	\end{equation}
	Then we derive that, for any such small $\Delta$,
	\begin{equation}\label{ress1}
		\mathbb{E}|x(T)-\eta_{\Delta}(T)|^{2}\leq C(\Delta^{2}\alpha(\Delta)^{4}\vee\Delta^{2(\gamma\wedge\sigma)}),
	\end{equation}
	and
	\begin{equation}\label{ress2}
		\mathbb{E}|x(T)-\bar{\eta}_{\Delta}(T)|^{2}\leq C(\Delta^{2}\alpha(\Delta)^{4}\vee\Delta^{2(\gamma\wedge\sigma)}).
	\end{equation}
\end{thm}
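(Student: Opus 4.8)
The plan is to run the standard ``removal of the stopping time'' argument that upgrades the localized estimate of Lemma~\ref{le336} to the global bound $(\ref{ress1})$; essentially all of the analytic work is already contained in Lemmas~\ref{le1}, \ref{le32}, \ref{le35} and \ref{le336}, so what is left is a careful balancing of the truncation level against the step size. Write $\varepsilon(\Delta):=\Delta^{2}\alpha(\Delta)^{4}\vee\Delta^{2(\gamma\wedge\sigma)}$, so that Lemma~\ref{le336} reads $\mathbb{E}|x(T\wedge\varrho_{\Delta,L})-\eta_{\Delta}(T\wedge\varrho_{\Delta,L})|^{2}\le C\varepsilon(\Delta)$. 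For each sufficiently small $\Delta\in(0,\Delta^{*})$ I would take the $\Delta$-dependent level $L=L(\Delta):=\varepsilon(\Delta)^{-1/(p-2)}$. Since $\alpha(\Delta)\to\infty$ forces $\varepsilon(\Delta)\to0$, we have $L(\Delta)>\|\xi\|$ for all small $\Delta$, and the hypothesis $(\ref{ttc1})$ says precisely $\alpha(\Delta)\ge\Lambda(L(\Delta))$, i.e.\ $\Lambda^{-1}(\alpha(\Delta))\ge L(\Delta)$ --- which is exactly the compatibility condition required to invoke Lemma~\ref{le336} with this $L$.

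First I would split over $\varrho_{\Delta,L}=\lambda_{L}\wedge\lambda_{\Delta,L}$:
\begin{equation*}
\mathbb{E}|x(T)-\eta_{\Delta}(T)|^{2}=\mathbb{E}\!\left(|x(T)-\eta_{\Delta}(T)|^{2}\mathbb{I}_{\{\varrho_{\Delta,L}>T\}}\right)+\mathbb{E}\!\left(|x(T)-\eta_{\Delta}(T)|^{2}\mathbb{I}_{\{\varrho_{\Delta,L}\le T\}}\right).
\end{equation*}
On $\{\varrho_{\Delta,L}>T\}$ one has $x(T)=x(T\wedge\varrho_{\Delta,L})$ and $\eta_{\Delta}(T)=\eta_{\Delta}(T\wedge\varrho_{\Delta,L})$, so the first term is at most $\mathbb{E}|x(T\wedge\varrho_{\Delta,L})-\eta_{\Delta}(T\wedge\varrho_{\Delta,L})|^{2}\le C\varepsilon(\Delta)$ by Lemma~\ref{le336}. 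For the second term I would use H\"older's inequality (equivalently, Young's) with the conjugate exponents $\tfrac{p}{2}$ and $\tfrac{p}{p-2}$, which is legitimate since $p>q>2$, together with the uniform moment bounds $\sup_{0\le t\le T}\mathbb{E}|x(t)|^{p}\le C$ (Lemma~\ref{le1}) and $\sup_{0<\Delta<\Delta^{*}}\sup_{0\le t\le T}\mathbb{E}|\eta_{\Delta}(t)|^{p}\le C$ (Lemma~\ref{le32}):
\begin{equation*}
\mathbb{E}\!\left(|x(T)-\eta_{\Delta}(T)|^{2}\mathbb{I}_{\{\varrho_{\Delta,L}\le T\}}\right)\le\left(\mathbb{E}|x(T)-\eta_{\Delta}(T)|^{p}\right)^{2/p}\,\mathbb{P}(\varrho_{\Delta,L}\le T)^{(p-2)/p}\le C\,\mathbb{P}(\varrho_{\Delta,L}\le T)^{(p-2)/p}.
\end{equation*}
By Lemma~\ref{le35}, $\mathbb{P}(\varrho_{\Delta,L}\le T)\le\mathbb{P}(\lambda_{L}\le T)+\mathbb{P}(\lambda_{\Delta,L}\le T)\le CL^{-p}$, so the second term is bounded by $CL^{-(p-2)}=C\varepsilon(\Delta)$ by the choice of $L$. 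Adding the two contributions gives $(\ref{ress1})$. Finally, $(\ref{ress2})$ follows from $(\ref{ress1})$ and the triangle inequality together with $\mathbb{E}|\eta_{\Delta}(T)-\bar{\eta}_{\Delta}(T)|^{2}\le C\Delta\alpha(\Delta)^{2}\le C\varepsilon(\Delta)$, which is Lemma~\ref{le31} with $\bar p=2$ combined with the standing relation $\Delta^{1/4}\alpha(\Delta)\le K_{0}$.

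I expect the only genuinely delicate point to be the joint choice of $L=L(\Delta)$: one must recognize that $(\ref{ttc1})$ is exactly the inequality $\Lambda^{-1}(\alpha(\Delta))\ge L(\Delta)$ that licenses Lemma~\ref{le336} for that particular level, and then check that with $L(\Delta)=\varepsilon(\Delta)^{-1/(p-2)}$ both error contributions are of order $\varepsilon(\Delta)$. It is also worth noting, for the H\"older step, that here the $p$-th moments of $x$ and $\eta_{\Delta}$ are \emph{uniformly} bounded rather than merely finite, so no auxiliary weight parameter is needed; were that not so, one would instead run Young's inequality with a weight $\delta\asymp\varepsilon(\Delta)$ and optimize $L$ afterwards.
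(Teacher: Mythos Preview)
Your proof is correct and follows essentially the same route as the paper's: split on the event $\{\varrho_{\Delta,L}\le T\}$, use Lemma~\ref{le336} for the localized piece, combine the $p$-th moment bounds of Lemmas~\ref{le1} and \ref{le32} with the tail estimate of Lemma~\ref{le35} for the complementary piece, and choose $L=\varepsilon(\Delta)^{-1/(p-2)}$ so that condition~(\ref{ttc1}) becomes exactly $\Lambda^{-1}(\alpha(\Delta))\ge L$. The only cosmetic difference is that the paper handles the complementary piece via Young's inequality with an auxiliary weight $\Gamma=\varepsilon(\Delta)$ rather than your direct H\"older step---as you yourself note, the two are equivalent here because the $p$-th moments are uniformly bounded.
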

\begin{proof} 
	Let $\hat{\Upsilon}_{\Delta}(t)=x(t)-\eta_{\Delta}(t)$ for $t\in[0,T]$ and $\Delta\in (0, \Delta^{*})$. Denote $\varrho=\varrho_{\Delta,L}$ for simplicity.
	One can see that
	\begin{equation*}
		\mathbb{E}|\hat{\Upsilon}_{\Delta}(T)|^{2}= \mathbb{E}\left(|\hat{\Upsilon}_{\Delta}(T)|^{2}\mathbb{I}_{\{\varrho>T\}}\right)+\mathbb{E}\left(|\hat{\Upsilon}_{\Delta}(T)|^{2}\mathbb{I}_{\{\varrho\leq T\}}\right).
	\end{equation*}
	Let $\Gamma>0$ be arbitrary. By Young's inequality, we derive that
	\begin{equation*}
		\begin{split}
			a^{2}b=(\Gamma a^{p})^{\frac{2}{p}}(\frac{b^{p/(p-2)}}{\Gamma^{2/(p-2)}})^{\frac{p-2}{p}}
			\leq\frac{2\Gamma}{p}a^{p}+\frac{p-2}{p\Gamma^{2/(p-2)}}b^{p/(p-2)}, \forall a,b>0.
		\end{split}
	\end{equation*}
	Therefore, 
	\begin{equation*}
		\mathbb{E}\left(|\hat{\Upsilon}_{\Delta}(T)|^{2}\mathbb{I}_{\{\varrho\leq T\}}\right)\leq\frac{2\Gamma}{p}\mathbb{E}|\hat{\Upsilon}_{\Delta}(T)|^{p}+\frac{p-2}{p\Gamma^{2/(p-2)}}\mathbb{P}(\varrho\leq T).
	\end{equation*}
	Using Lemma \ref{le1} and Lemma \ref{le32} gives that
	\begin{equation*}
		\mathbb{E}|\hat{\Upsilon}_{\Delta}(T)|^{p}\leq C (\mathbb{E}|x(T)|^{p}+ \mathbb{E}|\eta_{\Delta}(T)|^{p})\leq C.
	\end{equation*}
	Applying Lemma \ref{le35} yields that
	\begin{equation*}
		\mathbb{P}(\varrho\leq T)\leq \mathbb{P}(\lambda_{L}\leq T)+\mathbb{P}(\lambda_{\Delta,L}\leq T)\leq\frac{C}{L^{p}}.
	\end{equation*}
	Choose $\Gamma=\Delta^{2}\alpha(\Delta)^{4}\vee\Delta^{2(\gamma\wedge\sigma)}$ and $L=\left(\Delta^{2}\alpha(\Delta)^{4}\vee\Delta^{2(\gamma\wedge\sigma)}\right)^{\frac{-1}{p-2}}$. Then  we have
	\begin{equation*}
		\mathbb{E}|\hat{\Upsilon}_{\Delta}(T)|^{2}\leq \mathbb{E}|\hat{\Upsilon}_{\Delta}(T\wedge\varrho)|^{2}+ C(\Delta^{2}\alpha(\Delta)^{4}\vee\Delta^{2(\gamma\wedge\sigma)}).
	\end{equation*}
	Using the condition (\ref{ttc1})  gives that
	\begin{equation*}
		\Lambda^{-1}(\alpha(\Delta))\geq(\Delta^{2}\alpha(\Delta)^{4}\vee\Delta^{2(\gamma\wedge\sigma)})^{\frac{-1}{p-2}}=L.
	\end{equation*}
So, we get from Lemma \ref{le336} that
\begin{equation*}
	\mathbb{E}|\hat{\Upsilon}_{\Delta}(T)|^{2}\leq C(\Delta^{2}\alpha(\Delta)^{4}\vee\Delta^{2(\gamma\wedge\sigma)}).
\end{equation*}
Therefore, the desired result (\ref{ress1}) is obtained. Then combining Lemma \ref{le31} and (\ref{ress1}) gives (\ref{ress2}).
The proof is complete.
\end{proof}

\begin{cor}
	Under the assumptions in Theorem \ref{theo37}. Let $\Lambda(w)=c^{*}w^{\beta+1}$ for $w\geq1$, $c^{*}\geq0$ and $\alpha(\Delta)=\Delta^{-\frac{\varepsilon}{2}}$ for some $\varepsilon\in (0,\frac{1}{2} )$, $\Delta\in (0, \Delta^{*})$. 
	Then we derive that
	\begin{equation*}
		\mathbb{E}|x(T)-\eta_{\Delta}(T)|^{2}\leq C\Delta^{2[(1-\varepsilon)\wedge\gamma\wedge\sigma]},
	\end{equation*}
	and
	\begin{equation*}
		\mathbb{E}|x(T)-\bar{\eta}_{\Delta}(T)|^{2}\leq C\Delta^{2[(1-\varepsilon)\wedge\gamma\wedge\sigma]}.
	\end{equation*}
\end{cor}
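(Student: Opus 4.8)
The plan is to check that the prescribed $\Lambda$ and $\alpha$ are admissible choices in the sense of Section~2 (and hence that the full hypotheses of Theorem~\ref{theo37} are available), and then simply to substitute them into the bounds (\ref{ress1})--(\ref{ress2}) and simplify the right-hand side; no new probabilistic estimate is needed.

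\textbf{Admissibility of the data.} First I would verify that $\Lambda(w)=c^{*}w^{\beta+1}$ can play the role of the majorant demanded before (\ref{eq001}). Using Assumption~\ref{a34} with one time argument fixed, the maps $t\mapsto f(t,0,0)$ and $t\mapsto g(t,0,0)$ are bounded on $[0,T]$; then Assumption~\ref{a31} gives $|f(t,a,b)|\vee|g(t,a,b)|\leq C(1+|a|^{\beta+1}+|b|^{\beta+1})$, while Assumption~\ref{a36} supplies the same polynomial bound for $|g_{1}|$ and $|g_{2}|$. Consequently, on $\{|x|\vee|y|\leq w\}$ with $w\geq 1$ all four quantities are dominated by $Cw^{\beta+1}$, so taking $c^{*}$ large enough makes $\Lambda$ continuous, strictly increasing, and divergent at infinity, with inverse $\Lambda^{-1}(u)=(u/c^{*})^{1/(\beta+1)}$. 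Next, $\alpha(\Delta)=\Delta^{-\varepsilon/2}$ is strictly decreasing with $\alpha(\Delta)\to\infty$ as $\Delta\to 0$, and since $\varepsilon<\tfrac12$ we have $\tfrac14-\tfrac{\varepsilon}{2}>0$, hence $\Delta^{1/4}\alpha(\Delta)=\Delta^{1/4-\varepsilon/2}\leq 1$ for all $\Delta\in(0,1]$, so (\ref{eq001}) holds with $K_{0}=1$. The remaining standing hypothesis (\ref{ttc1}) of Theorem~\ref{theo37} is assumed as part of the phrase ``under the assumptions in Theorem~\ref{theo37}''; I would note in passing that, with these explicit choices, (\ref{ttc1}) reduces (after taking logarithms) to a lower bound on $p$ in terms of $\beta,\varepsilon,\gamma,\sigma$.

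\textbf{Substitution.} With all hypotheses of Theorem~\ref{theo37} in force, (\ref{ress1}) yields
\begin{equation*}
	\mathbb{E}|x(T)-\eta_{\Delta}(T)|^{2}\leq C\bigl(\Delta^{2}\alpha(\Delta)^{4}\vee\Delta^{2(\gamma\wedge\sigma)}\bigr).
\end{equation*}
Since $\alpha(\Delta)^{4}=\Delta^{-2\varepsilon}$, we have $\Delta^{2}\alpha(\Delta)^{4}=\Delta^{2(1-\varepsilon)}$, and because $0<\Delta<1$ the elementary identity $\Delta^{a}\vee\Delta^{b}=\Delta^{a\wedge b}$ for $a,b\geq 0$ gives
\begin{equation*}
	\Delta^{2}\alpha(\Delta)^{4}\vee\Delta^{2(\gamma\wedge\sigma)}=\Delta^{2(1-\varepsilon)}\vee\Delta^{2(\gamma\wedge\sigma)}=\Delta^{2[(1-\varepsilon)\wedge\gamma\wedge\sigma]},
\end{equation*}
which is the first asserted bound. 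The second bound is obtained in exactly the same fashion from (\ref{ress2}), or equivalently by combining the first bound with Lemma~\ref{le31}.

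I do not expect a serious obstacle here: the argument is essentially a specialization of Theorem~\ref{theo37}. The only mildly technical step is confirming that $\Lambda(w)=c^{*}w^{\beta+1}$ genuinely dominates $|f|$ and $|g|$ themselves, and not merely their first derivatives $g_{1},g_{2}$ (which are controlled directly by Assumption~\ref{a36}); this is where one must invoke the polynomial-growth consequences of Assumptions~\ref{a31} and~\ref{a34} as above. Everything else is routine algebra with powers of $\Delta$.
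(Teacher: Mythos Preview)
Your proposal is correct and follows precisely the intended route: the paper states the corollary without proof, treating it as an immediate specialization of Theorem~\ref{theo37}, and your argument---verifying that the chosen $\Lambda$ and $\alpha$ satisfy the structural requirements of Section~2 and then substituting $\alpha(\Delta)^{4}=\Delta^{-2\varepsilon}$ into (\ref{ress1})--(\ref{ress2})---is exactly that specialization carried out in full. Your admissibility checks (in particular the verification of (\ref{eq001}) via $\varepsilon<\tfrac12$ and the polynomial majorization of $|f|,|g|$ from Assumptions~\ref{a31} and~\ref{a34}) are more explicit than anything the paper writes, but they are the natural details behind the corollary.
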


\begin{rem}\label{remar2}
	The better result can be given if we impose the stronger conditions on $\gamma$ and $\sigma$. For instance, suppose that $\gamma,\sigma\in [1-\varepsilon,1]$. Then we have $(1-\varepsilon)$-order convergence rate in $\mathcal{L}^{2}$ sense.
\end{rem}
To get the convergence rate in  $\mathcal{L}^{\bar{q}}$($\bar{q}>2$) sense, 
Assumption \ref{a32} needs to be replaced by Assumption \ref{a390}.
\begin{ass}\label{a390}
	The exist constants $\hat{K}_{1}>0$ and $\hat{q}\in (2,q)$ such that
	\begin{equation*}
		(a-\bar{a})^T(f(t,a,b)-f(t,\bar{a},\bar{b}))+(\hat{q}-1)\left|g(t,a,b)-g(t,\bar{a},\bar{b})\right|^{2}\leq \hat{K}_{1}(|a-\bar{a}|^{2}+|b-\bar{b}|^{2})
	\end{equation*}
	for any $t\in [0,T]$ and $a,\bar{a},b,\bar{b}\in \mathbb{R}$.
\end{ass}

Since many techniques used in Lemma \ref{le336} are applied here, we mainly state different proof processes in the following lemma, but omit the similar processes.

\begin{lem}\label{le310}
	Let Assumptions \ref{a31}, \ref{a33}-\ref{a36} and \ref{a390} hold with $p\geq2(1+\beta)\hat{q}$.
		Let $L>||\xi||$ be any real number,
	and  suppose that there exists a sufficiently small $\Delta\in (0, \Delta^{*})$ satisfying $\Lambda^{-1}(\alpha(\Delta))\geq L$.
	 Then we have, for any $\bar{q}\in (2,\hat{q})$, 
	\begin{equation*}
		\mathbb{E}|x(T\wedge\varrho_{\Delta,L})-\eta_{\Delta}(T\wedge\varrho_{\Delta,L})|^{\bar{q}}\leq  C(\Delta^{\bar{q}}\alpha(\Delta)^{2\bar{q}}\vee\Delta^{\bar{q}(\gamma\wedge\sigma)}),
	\end{equation*}
	where $\varrho_{\Delta,L}:=\lambda_{L}\wedge\lambda_{\Delta,L}$. 
\end{lem}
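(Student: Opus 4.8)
The plan is to run the argument of Lemma~\ref{le336} with the second moment replaced throughout by the $\bar{q}$th moment and with Assumption~\ref{a390} playing the role of Assumption~\ref{a32}. Write $\varrho=\varrho_{\Delta,L}$ and $\Upsilon_{\Delta}(t)=x(t)-Z_{\Delta}(t)$. Exactly as in Lemma~\ref{le336}, on $[0,t\wedge\varrho]$ every argument appearing in the coefficients stays below $L\le\Lambda^{-1}(\alpha(\Delta))$, so $f_{\Delta},g_{\Delta},g_{1,\Delta},g_{2,\Delta}$ coincide with $f,g,g_{1},g_{2}$ there; truncation survives only in the one-step correction $\theta f_{\Delta}(0,\xi(0),\xi(-\tau))\Delta$ and through the crude bound (\ref{eq30}), each of which contributes terms of order $\Delta^{\bar{q}}\alpha(\Delta)^{\bar{q}}$ at worst.

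First I would apply It\^{o}'s formula to $|\Upsilon_{\Delta}(t\wedge\varrho)|^{\bar{q}}$, which produces the drift term $\bar{q}|\Upsilon_{\Delta}|^{\bar{q}-2}\Upsilon_{\Delta}^{T}\big(f(s,x(s),x(s-\tau))-f(\mu(s),\bar{\eta}_{\Delta}(s),\bar{\eta}_{\Delta}(s-\tau))\big)$ and the diffusion term $\tfrac{\bar{q}(\bar{q}-1)}{2}|\Upsilon_{\Delta}|^{\bar{q}-2}\big|g(s,x(s),x(s-\tau))-(\text{Milstein diffusion increment})\big|^{2}$. I then insert the Taylor expansion (\ref{eqeq07}) with $\psi=f$ and $\psi=g$ to rewrite $f$ and $g$ evaluated at $\bar{\eta}_{\Delta}$ in terms of their values at $\eta_{\Delta}$ plus the $\psi_{1}\int g\,dB$, $\psi_{2}\int g\,dB$ and $\bar{R}_{\psi}$ corrections. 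Grouping the resulting terms as in Lemma~\ref{le336} gives $\mathbb{E}|\Upsilon_{\Delta}(t\wedge\varrho)|^{\bar{q}}\le C\Delta^{\bar{q}}\alpha(\Delta)^{\bar{q}}+J_{1}+J_{2}+J_{3}$, where $J_{1}$ collects the genuine coefficient differences weighted by $|\Upsilon_{\Delta}|^{\bar{q}-2}$, $J_{2}$ the remainders $\bar{R}_{g}$, and $J_{3}$ the interaction of the one-step $f_{\Delta}$ correction with the drift difference.

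For $J_{1}$ I would telescope $f(s,x(s),x(s-\tau))-f(\mu(s),\bar{\eta}_{\Delta}(s),\bar{\eta}_{\Delta}(s-\tau))$ through the time-frozen change $x\to\eta_{\Delta}$, the time change $s\to\mu(s)$ handled by Assumption~\ref{a34}, and the change $\eta_{\Delta}\to\bar{\eta}_{\Delta}$ handled again by (\ref{eqeq07}) and Lemma~\ref{le31}, and likewise for $g$. The leading piece $|\Upsilon_{\Delta}|^{\bar{q}-2}\big[(x-\eta_{\Delta})^{T}(f(s,x(s),\cdot)-f(s,\eta_{\Delta}(s),\cdot))+(\hat{q}-1)|g(s,x(s),\cdot)-g(s,\eta_{\Delta}(s),\cdot)|^{2}\big]$ is controlled by Assumption~\ref{a390} by $C|\Upsilon_{\Delta}|^{\bar{q}-2}(|x-\eta_{\Delta}|^{2}+|x(\cdot-\tau)-\eta_{\Delta}(\cdot-\tau)|^{2})$, and then Young's inequality with exponents $\tfrac{\bar{q}}{\bar{q}-2},\tfrac{\bar{q}}{2}$, together with Assumption~\ref{a35} on the initial segment, turns this into $C\int_{0}^{t}\mathbb{E}|\hat{\Upsilon}_{\Delta}(s\wedge\varrho)|^{\bar{q}}\,ds+C\Delta^{\bar{q}\gamma}$ with $\hat{\Upsilon}_{\Delta}=x-\eta_{\Delta}$. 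The strict gap $\bar{q}<\hat{q}$ is precisely the slack that lets me push the extra $|g(\cdot)-g(\cdot)|^{2}$ contributions generated by the $s\to\mu(s)$ and $\eta_{\Delta}\to\bar{\eta}_{\Delta}$ substitutions into the error pile, where Assumption~\ref{a34}, Lemma~\ref{le31}, Lemma~\ref{lemmaa33} and H\"{o}lder's inequality bound them by $C(\Delta^{\bar{q}\sigma}+\Delta^{\bar{q}}\alpha(\Delta)^{2\bar{q}})$; the $\psi_{1}\int g\,dB$, $\psi_{2}\int g\,dB$ pieces are paired with $|\Upsilon_{\Delta}|^{\bar{q}-1}$ and, via Young's inequality and Lemma~\ref{lemmaa33}, give $C\int_{0}^{t}\mathbb{E}|\hat{\Upsilon}_{\Delta}(s\wedge\varrho)|^{\bar{q}}\,ds+C(\Delta^{\bar{q}}\alpha(\Delta)^{2\bar{q}}+\Delta^{\bar{q}\gamma})$. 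For $J_{2}$ I invoke Lemma~\ref{le34} to get $J_{2}\le C\Delta^{\bar{q}}\alpha(\Delta)^{2\bar{q}}$, and $J_{3}$ is treated exactly like the corresponding term in Lemma~\ref{le336} using (\ref{eq30}) and Young's inequality. Collecting the pieces and passing from $\Upsilon_{\Delta}$ back to $\hat{\Upsilon}_{\Delta}$ via (\ref{eq22}) and (\ref{eq30}) yields $\mathbb{E}|\hat{\Upsilon}_{\Delta}(t\wedge\varrho)|^{\bar{q}}\le C\int_{0}^{t}\mathbb{E}|\hat{\Upsilon}_{\Delta}(s\wedge\varrho)|^{\bar{q}}\,ds+C(\Delta^{\bar{q}}\alpha(\Delta)^{2\bar{q}}\vee\Delta^{\bar{q}(\gamma\wedge\sigma)})$, and Gronwall's inequality finishes the proof.

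The main obstacle I expect is the bookkeeping of the $\bar{q}$th-power cross terms: every product of the form $|\Upsilon_{\Delta}|^{\bar{q}-2}\cdot(\text{error})$ or $|\Upsilon_{\Delta}|^{\bar{q}-1}\cdot(\text{error})$ has to be separated by Young's inequality so that the $|\Upsilon_{\Delta}|^{\bar{q}}$ part feeds the Gronwall integral while the error factor retains enough integrability, and this forces one to track the H\"{o}lder conjugate exponents carefully. The strict chain $\bar{q}<\hat{q}<q$ together with the moment bounds supplied by Lemmas~\ref{le32} and~\ref{lemmaa33} under $p\ge 2(1+\beta)\hat{q}$ is exactly what keeps all these exponents admissible; choosing the right split of the $(\hat{q}-1)|g-g|^{2}$ term against the auxiliary diffusion-type errors is the one genuinely delicate point, and everything else is a routine $\bar{q}$th-moment rerun of Lemma~\ref{le336}.
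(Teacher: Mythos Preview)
Your proposal is correct and mirrors the paper's own argument essentially step for step: It\^{o}'s formula for $|\Upsilon_{\Delta}|^{\bar{q}}$, the three-way split into $B_{1},B_{2},B_{3}$ (your $J_{1},J_{2},J_{3}$), the telescoping $x\to\eta_{\Delta}\to$ time-change $\to\bar{\eta}_{\Delta}$, Assumption~\ref{a390} on the monotone piece, Lemma~\ref{le34} on the remainders, and Gronwall. The ``slack $\bar{q}<\hat{q}$'' you identify is exactly the paper's inequality $(\bar{q}-1)|g-g|^{2}\le(\hat{q}-1)|g-g|^{2}+\frac{(\bar{q}-1)(\hat{q}-1)}{\hat{q}-\bar{q}}|g-g|^{2}$, which isolates the time-change diffusion error as your $J_{14}$; one cosmetic point is that $J_{2}$ also contributes a $\int_{0}^{t}\mathbb{E}|\Upsilon_{\Delta}|^{\bar{q}}\,ds$ term after the Young split (the paper records this), but it is absorbed by Gronwall as you note.
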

\begin{proof}
	For simplicity, let $\varrho_{\Delta,L}=\varrho$ and  $\Upsilon_{\Delta}(t)=x(t)-Z_{\Delta}(t)$. We derive from It\^{o}'s formula that
	\begin{equation*}
		\begin{split}
			&\mathbb{E}|\Upsilon_{\Delta}(t\wedge\varrho)|^{\bar{q}}\\ \leq&\theta^{\bar{q}}|f_{\Delta}(0,\xi(0),\xi(-\tau))|^{\bar{q}}\Delta^{\bar{q}}
			\\ &+\mathbb{E}\int_{0}^{t\wedge\varrho}\bar{q}|\Upsilon_{\Delta}(s)|^{\bar{q}-2}[ \Upsilon_{\Delta}^T(s)(f(s,x(s),x(s-\tau))-f_{\Delta}(\mu(s),\bar{y}_\Delta(s),\bar{y}_\Delta(s-\tau)))			\\ &+\frac{\bar{q}-1}{2}\left|g(s,x(s),x(s-\tau))-g_{\Delta}(\mu(s),\bar{y}_\Delta(s),\bar{y}_\Delta(s-\tau))\right.
			\\ &-g_{1,\Delta}(\mu(s),\bar{y}_\Delta(s),\bar{y}_\Delta(s-\tau))g_{\Delta}(\mu(s),\bar{y}_\Delta(s),\bar{y}_\Delta(s-\tau))\Delta\hat{B}(s)
			\\ &\left.-g_{2,\Delta}(\mu(s),\bar{y}_\Delta(s),\bar{y}_\Delta(s-\tau))g_{\Delta}(\mu(s-\tau),\bar{y}_\Delta(s-\tau),\bar{y}_\Delta(s-2\tau))\Delta\hat{B}(s-\tau)\right|^{2}]ds
			\\\leq&C\Delta^{\bar{q}}\alpha(\Delta)^{\bar{q}}+C\mathbb{E}\int_{0}^{t\wedge\varrho}|\Upsilon_{\Delta}(s)|^{\bar{q}-2}\left[( x(s)-\eta_{\Delta}(s))^T\right.\\
			&\quad\cdot (f(s,x(s),x(s-\tau))-f(\mu(s),\bar{y}_\Delta(s),\bar{y}_\Delta(s-\tau)))
			\\ &\left.+(\bar{q}-1)|g(s,x(s),x(s-\tau))-g(\mu(s),y_\Delta(s),y_\Delta(s-\tau))|^{2}\right]ds
			\\ &+C\mathbb{E}\int_{0}^{t\wedge\varrho}|\Upsilon_{\Delta}(s)|^{\bar{q}-2}|\bar{R}_{g}(s,\eta_{\Delta}(s),\bar{\eta}_{\Delta}(s) ,\eta_{\Delta}(s-\tau),\bar{\eta}_{\Delta}(s-\tau))|^{2}ds
			\\ &+C\mathbb{E}\int_{0}^{t\wedge\varrho}|\Upsilon_{\Delta}(s)|^{\bar{q}-2}\theta\Delta f_{\Delta}^T(s,\eta_{\Delta}(s),\eta_{\Delta}(s-\tau))\\
			&\quad\cdot  (f(s,x(s),x(s-\tau))-f(\mu(s),\bar{y}_\Delta(s),\bar{y}_\Delta(s-\tau))) ds
			\\=&:C\Delta^{\bar{q}}\alpha(\Delta)^{\bar{q}}+B_{1}+B_{2}+B_{3}.
		\end{split}
	\end{equation*}
	 For $\bar{q}\in (2,\hat{q})$, note that
	\begin{equation}\label{eqq5}
		\begin{split}
			&(\bar{q}-1)|g(s,x(s),x(s-\tau))-g(\mu(s),\eta_{\Delta}(s),\eta_{\Delta}((s-\tau))|^{2}
			\\&\leq(\hat{q}-1)|g(s,x(s),x(s-\tau))-g(s,\eta_{\Delta}(s),\eta_{\Delta}((s-\tau))|^{2}
			\\&+\frac{(\bar{q}-1)(\hat{q}-1)}{\hat{q}
				-\bar{q}}|g(s,\eta_{\Delta}(s),\eta_{\Delta}(s-\tau))-g(\mu(s),\eta_{\Delta}(s),\eta_{\Delta}(s-\tau))|^{2}.
		\end{split}
	\end{equation}
	From (\ref{eqq5}), one can see that
	\begin{equation*}
		\begin{split}
			B_{1}\leq&C\mathbb{E}\int_{0}^{t\wedge\varrho}|\Upsilon_{\Delta}(s)|^{\bar{q}-2}\left[( x(s)-\eta_{\Delta}(s))^T\right.
			\\
			&\quad\cdot (f(s,x(s),x(s-\tau))-f(s,\eta_{\Delta}(s),\eta_{\Delta}(s-\tau)))
			\\&\quad\left.+(\hat{q}-1)|g(s,x(s),x(s-\tau))-g(s,\eta_{\Delta}(s),\eta_{\Delta}(s-\tau))|^{2}\right]ds
			\\ &+C\mathbb{E}\int_{0}^{t\wedge\varrho}|\Upsilon_{\Delta}(s)|^{\bar{q}-2}( x(s)-\eta_{\Delta}(s))^T\\
			&\quad\cdot (f(s,\eta_{\Delta}(s),\eta_{\Delta}(s-\tau))-f(\mu(s),\eta_{\Delta}(s),\eta_{\Delta}(s-\tau))) ds
			\\ &+C\mathbb{E}\int_{0}^{t\wedge\varrho}|\Upsilon_{\Delta}(s)|^{\bar{q}-2}( x(s)-\eta_{\Delta}(s))^T\\
			&\quad\cdot (f(\mu(s),\eta_{\Delta}(s),\eta_{\Delta}(s-\tau))-f(\mu(s),\bar{\eta}_{\Delta}(s),\bar{\eta}_{\Delta}(s-\tau))) ds
			\end{split}
	\end{equation*}
		\begin{equation*}
			\begin{split}
			\\ &+C\mathbb{E}\int_{0}^{t\wedge\varrho}\frac{(\bar{q}-1)(\hat{q}-1)}{\hat{q}
				-\bar{q}}|\Upsilon_{\Delta}(s)|^{\bar{q}-2}\\
			&\quad\cdot |g(s,\eta_{\Delta}(s),\eta_{\Delta}(s-\tau))-g(\mu(s),\eta_{\Delta}(s),\eta_{\Delta}(s-\tau))|^{2}ds
			\\=:&B_{11}+B_{12}+B_{13}+B_{14}.
		\end{split}
	\end{equation*}
	By Assumption \ref{a390}, we get that
	\begin{equation*}
		\begin{split}
			B_{11}\leq&C\mathbb{E}\int_{0}^{t\wedge\varrho}|\Upsilon_{\Delta}(s)|^{\bar{q}-2}\left(|x(s)-\eta_{\Delta}(s)|^{2}+|x(s-\tau)-\eta_{\Delta}(s-\tau)|^{2}\right)ds
			\\\leq&C\mathbb{E}\int_{0}^{t\wedge\varrho}|\Upsilon_{\Delta}(s)|^{\bar{q}}ds+C\mathbb{E}\int_{0}^{t\wedge\varrho}|x(s)-\eta_{\Delta}(s)|^{\bar{q}}ds
			+C\mathbb{E}\int_{-\tau}^{0}|\xi(s)-\xi(\mu(s))|^{\bar{q}}ds
			\\\leq&C\int_{0}^{t}\mathbb{E}|\Upsilon_{\Delta}(s\wedge\varrho)|^{\bar{q}}ds+C\int_{0}^{t}\mathbb{E}|x(s\wedge\varrho)-\eta_{\Delta}(s\wedge\varrho)|^{\bar{q}}ds
			+C\Delta^{\bar{q}\gamma}.
		\end{split}
	\end{equation*}
	Applying the similar technique in Lemma \ref{le336} means that
	\begin{equation*}
		B_{12}\leq C\int_{0}^{t}\mathbb{E}|\Upsilon_{\Delta}(s\wedge\varrho)|^{\bar{q}}ds+C\int_{0}^{t}\mathbb{E}|x(s\wedge\varrho)-\eta_{\Delta}(s\wedge\varrho)|^{\bar{q}}ds
		+C\Delta^{\bar{q}\sigma},
	\end{equation*}
	\begin{equation*}
			\begin{split}
		B_{13}\leq &C\int_{0}^{t}\mathbb{E}|\Upsilon_{\Delta}(s\wedge\varrho)|^{\bar{q}}ds+C\int_{0}^{t}\mathbb{E}|x(s\wedge\varrho)-\eta_{\Delta}(s\wedge\varrho)|^{\bar{q}}ds\\
		&+C\Delta^{\bar{q}}\alpha(\Delta)^{2\bar{q}}+C\Delta^{\bar{q}\gamma},
			\end{split}
	\end{equation*}
	\begin{equation*}
		B_{14}\leq C\int_{0}^{t}\mathbb{E}|\Upsilon_{\Delta}(s\wedge\varrho)|^{\bar{q}}ds+C\Delta^{\bar{q}\sigma}.
	\end{equation*}
	Hence,
	\begin{equation}\label{bbb1}
		\begin{split}
		B_{1}\leq &C\int_{0}^{t}\mathbb{E}|\Upsilon_{\Delta}(s\wedge\varrho)|^{\bar{q}}ds+C\int_{0}^{t}\mathbb{E}|x(s\wedge\varrho)-\eta_{\Delta}(s\wedge\varrho)|^{\bar{q}}ds\\
		&+C\Delta^{\bar{q}}\alpha(\Delta)^{2\bar{q}}+C\Delta^{\bar{q}\gamma}+C\Delta^{\bar{q}\sigma}.
		\end{split}
	\end{equation}
	We derive from Lemma \ref{le34} that
	\begin{equation}\label{bbb2}
		B_{2}\leq C\int_{0}^{t}\mathbb{E}|\Upsilon_{\Delta}(s\wedge\varrho)|^{\bar{q}}ds
		+C\Delta^{\bar{q}}\alpha(\Delta)^{2\bar{q}}.
	\end{equation}
	Moreover, applying the technique in the estimation of $B_{1}$  yields that
	\begin{equation}\label{bbb3}
	\begin{split}
		B_{3}\leq &C\int_{0}^{t}\mathbb{E}|\Upsilon_{\Delta}(s\wedge\varrho)|^{\bar{q}}ds+C\int_{0}^{t}\mathbb{E}|x(s\wedge\varrho)-\eta_{\Delta}(s\wedge\varrho)|^{\bar{q}}ds\\
		&+C\Delta^{\bar{q}}\alpha(\Delta)^{2\bar{q}}+C\Delta^{\bar{q}\gamma}+C\Delta^{\bar{q}\sigma}.
	\end{split}
	\end{equation}
	Combining (\ref{bbb1}) - (\ref{bbb3}) can give that
	\begin{equation*}
	\begin{split}
		\mathbb{E}|\Upsilon_{\Delta}(t\wedge\varrho)|^{\bar{q}}\leq &C\int_{0}^{t}\mathbb{E}|\Upsilon_{\Delta}(s\wedge\varrho)|^{\bar{q}}ds+C\int_{0}^{t}\mathbb{E}|x(s\wedge\varrho)-\eta_{\Delta}(s\wedge\varrho)|^{\bar{q}}ds\\
		&+C\Delta^{\bar{q}}\alpha(\Delta)^{2\bar{q}}+C\Delta^{\bar{q}\gamma}+C\Delta^{\bar{q}\sigma}.
	\end{split}
	\end{equation*}
	One can get from Gronwall's inequality that
	\begin{equation*}
	\begin{split}
		\mathbb{E}|\Upsilon_{\Delta}(t\wedge\varrho)|^{\bar{q}}\leq &C\int_{0}^{t}\mathbb{E}|x(s\wedge\varrho)-\eta_{\Delta}(s\wedge\varrho)|^{\bar{q}}ds+C\Delta^{\bar{q}}\alpha(\Delta)^{2\bar{q}}+C\Delta^{\bar{q}\gamma}+C\Delta^{\bar{q}\sigma}.
	\end{split}
	\end{equation*}
	Then we derive that
	\begin{equation*}
		\begin{split}
			&\mathbb{E}|x(t\wedge\varrho)-\eta_{\Delta}(t\wedge\varrho)|^{\bar{q}}\\\leq&C\Delta^{\bar{q}}\alpha(\Delta)^{\bar{q}}
			+C\mathbb{E}|\Upsilon_{\Delta}(t\wedge\varrho)|^{\bar{q}}
			\\\leq& C\int_{0}^{t}\mathbb{E}|x(s\wedge\varrho)-\eta_{\Delta}(s\wedge\varrho)|^{\bar{q}}ds
			+C\Delta^{\bar{q}}\alpha(\Delta)^{2\bar{q}}+C\Delta^{\bar{q}\gamma}+C\Delta^{\bar{q}\sigma}.
		\end{split}
	\end{equation*}
	The desired result can be obtained due to the Gronwall inequality. We complete the proof.
\end{proof}

\begin{thm}\label{theo311}
	Let Assumptions \ref{a31}, \ref{a33}-\ref{a36} and \ref{a390} hold with $p\geq 2 (\beta+1)\hat{q}$.  For any sufficiently small $\Delta\in (0, \Delta^{*})$,  assume that
	\begin{equation}\label{eqq10}
		\alpha(\Delta)\geq\Lambda\left((\Delta^{\bar{q}}\alpha(\Delta)^{2\bar{q}}\vee\Delta^{\bar{q}(\gamma\wedge\sigma)})^{\frac{-1}{p-\bar{q}}}\right).
	\end{equation}
	Then, for $\bar{q}\in (2,\hat{q})$ and such small $\Delta$, we have
	\begin{equation}\label{lare1}
		\mathbb{E}|x(T)-\eta_{\Delta}(T)|^{\bar{q}}\leq C(\Delta^{\bar{q}}\alpha(\Delta)^{2\bar{q}}\vee\Delta^{\bar{q}(\gamma\wedge\sigma)}),
	\end{equation}
	and
	\begin{equation}\label{lare2}
		\mathbb{E}|x(T)-\bar{\eta}_{\Delta}(T)|^{\bar{q}}\leq C(\Delta^{\bar{q}}\alpha(\Delta)^{2\bar{q}}\vee\Delta^{\bar{q}(\gamma\wedge\sigma)}).
	\end{equation}
\end{thm}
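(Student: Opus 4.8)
The plan is to reproduce the architecture of the proof of Theorem \ref{theo37}, replacing the second moment by the $\bar{q}$th moment throughout and invoking Lemma \ref{le310} in the role that Lemma \ref{le336} played there. Write $\hat{\Upsilon}_{\Delta}(t)=x(t)-\eta_{\Delta}(t)$ and $\varrho=\varrho_{\Delta,L}$, and split
\[
\mathbb{E}|\hat{\Upsilon}_{\Delta}(T)|^{\bar{q}}=\mathbb{E}\left(|\hat{\Upsilon}_{\Delta}(T)|^{\bar{q}}\mathbb{I}_{\{\varrho>T\}}\right)+\mathbb{E}\left(|\hat{\Upsilon}_{\Delta}(T)|^{\bar{q}}\mathbb{I}_{\{\varrho\leq T\}}\right).
\]
The first term is $\mathbb{E}|\hat{\Upsilon}_{\Delta}(T\wedge\varrho)|^{\bar{q}}$ on the event $\{\varrho>T\}$ and will be handled by Lemma \ref{le310}; the second term is the one that requires the stopping time to be removed.

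For the second term I would use Young's inequality with conjugate exponents $p/\bar{q}$ and $p/(p-\bar{q})$ (legitimate since $p\geq2(\beta+1)\hat{q}>\bar{q}$): for any $\Gamma>0$,
\[
a^{\bar{q}}b\leq\frac{\bar{q}\Gamma}{p}a^{p}+\frac{p-\bar{q}}{p\,\Gamma^{\bar{q}/(p-\bar{q})}}\,b^{p/(p-\bar{q})},\qquad a,b>0,
\]
which gives
\[
\mathbb{E}\left(|\hat{\Upsilon}_{\Delta}(T)|^{\bar{q}}\mathbb{I}_{\{\varrho\leq T\}}\right)\leq\frac{\bar{q}\Gamma}{p}\mathbb{E}|\hat{\Upsilon}_{\Delta}(T)|^{p}+\frac{p-\bar{q}}{p\,\Gamma^{\bar{q}/(p-\bar{q})}}\,\mathbb{P}(\varrho\leq T).
\]
By Lemma \ref{le1} and Lemma \ref{le32} one has $\mathbb{E}|\hat{\Upsilon}_{\Delta}(T)|^{p}\leq C(\mathbb{E}|x(T)|^{p}+\mathbb{E}|\eta_{\Delta}(T)|^{p})\leq C$, and by Lemma \ref{le35}, $\mathbb{P}(\varrho\leq T)\leq\mathbb{P}(\lambda_{L}\leq T)+\mathbb{P}(\lambda_{\Delta,L}\leq T)\leq C/L^{p}$. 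I would then make the balancing choice
\[
\Gamma=\Delta^{\bar{q}}\alpha(\Delta)^{2\bar{q}}\vee\Delta^{\bar{q}(\gamma\wedge\sigma)},\qquad L=\left(\Delta^{\bar{q}}\alpha(\Delta)^{2\bar{q}}\vee\Delta^{\bar{q}(\gamma\wedge\sigma)}\right)^{-1/(p-\bar{q})},
\]
so that both contributions are of order $\Delta^{\bar{q}}\alpha(\Delta)^{2\bar{q}}\vee\Delta^{\bar{q}(\gamma\wedge\sigma)}$, leaving
\[
\mathbb{E}|\hat{\Upsilon}_{\Delta}(T)|^{\bar{q}}\leq\mathbb{E}|\hat{\Upsilon}_{\Delta}(T\wedge\varrho)|^{\bar{q}}+C\left(\Delta^{\bar{q}}\alpha(\Delta)^{2\bar{q}}\vee\Delta^{\bar{q}(\gamma\wedge\sigma)}\right).
\]

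Next I would verify that the hypothesis \eqref{eqq10} is exactly what licenses the use of Lemma \ref{le310} at this $L$: applying the increasing function $\Lambda^{-1}$ to \eqref{eqq10} gives $\Lambda^{-1}(\alpha(\Delta))\geq(\Delta^{\bar{q}}\alpha(\Delta)^{2\bar{q}}\vee\Delta^{\bar{q}(\gamma\wedge\sigma)})^{-1/(p-\bar{q})}=L$, which is the requirement in Lemma \ref{le310}. Hence Lemma \ref{le310} yields $\mathbb{E}|\hat{\Upsilon}_{\Delta}(T\wedge\varrho)|^{\bar{q}}\leq C(\Delta^{\bar{q}}\alpha(\Delta)^{2\bar{q}}\vee\Delta^{\bar{q}(\gamma\wedge\sigma)})$, and combining with the previous display gives \eqref{lare1}. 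Finally, \eqref{lare2} follows from \eqref{lare1} by the triangle inequality together with the bound $\mathbb{E}|\eta_{\Delta}(T)-\bar{\eta}_{\Delta}(T)|^{\bar{q}}\leq C\Delta^{\bar{q}/2}\alpha(\Delta)^{\bar{q}}$ of Lemma \ref{le31} (recall $\Delta^{\bar{q}/2}\alpha(\Delta)^{\bar{q}}\leq K_{0}^{\bar{q}}\Delta^{\bar{q}/4}$), exactly as \eqref{ress2} was deduced from \eqref{ress1}.

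\emph{Where the work is.} The genuinely delicate estimate — the $\bar{q}$th moment bound for the \emph{stopped} error, where the strengthened monotonicity Assumption \ref{a390} (with exponent $\hat{q}$, splitting the diffusion difference via \eqref{eqq5}) is needed to close the Gronwall argument for $|\Upsilon_{\Delta}|^{\bar{q}}$ rather than $|\Upsilon_{\Delta}|^{2}$ — has already been absorbed into Lemma \ref{le310}. Consequently the present theorem is the routine ``removal of the stopping time'' step, and the only point demanding care is the bookkeeping of the three exponents $p,\bar{q},\hat{q}$: one must check that $p\geq2(\beta+1)\hat{q}>\bar{q}$ makes Young's inequality and the choice of $L$ mutually consistent, and that the constraint \eqref{eqq10} is compatible (nonvacuous) for admissible $\alpha$ and $\Lambda$, which is illustrated by the concrete choice $\Lambda(w)=c^{*}w^{\beta+1}$, $\alpha(\Delta)=\Delta^{-\varepsilon/2}$ as in the corollary following Theorem \ref{theo37}.
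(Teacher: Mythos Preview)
Your proposal is correct and follows essentially the same route as the paper's own proof: the same splitting on $\{\varrho>T\}$ versus $\{\varrho\leq T\}$, the same Young inequality with conjugate exponents $p/\bar{q}$ and $p/(p-\bar{q})$, the same choices of $\Gamma$ and $L$, the same appeal to Lemmas \ref{le1}, \ref{le32}, \ref{le35} for the moment and probability bounds, the same use of \eqref{eqq10} to license Lemma \ref{le310}, and the same deduction of \eqref{lare2} from \eqref{lare1} via Lemma \ref{le31}. Your closing remarks correctly identify that the substantive analytic work (the role of Assumption \ref{a390} and the inequality \eqref{eqq5}) has already been absorbed into Lemma \ref{le310}.
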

\begin{proof}
	Let $\hat{\Upsilon}_{\Delta}(t)=x(t)-\eta_{\Delta}(t)$ and $\varrho=\varrho_{\Delta,L}$ for simplicity. Obviously,
	\begin{equation*}
		\mathbb{E}|\hat{\Upsilon}_{\Delta}(T)|^{\bar{q}}= \mathbb{E}\left(|\hat{\Upsilon}_{\Delta}(T)|^{\bar{q}}\mathbb{I}_{\{\varrho>T\}}\right)+\mathbb{E}\left(|\hat{\Upsilon}_{\Delta}(T)|^{\bar{q}}\mathbb{I}_{\{\varrho\leq T\}}\right).
	\end{equation*}
	Let $\hat{\Gamma}>0$ be arbitrary. Applying Young's inequality gives that 
	\begin{equation*}
		\begin{split}
			a^{\bar{q}}b=&(\hat{\Gamma} a^{p})^{\frac{\bar{q}}{p}}(\frac{b^{p/(p-\bar{q})}}{\hat{\Gamma}^{\bar{q}/(p-\bar{q})}})^{\frac{p-\bar{q}}{p}}
		\leq\frac{\bar{q}\hat{\Gamma}}{p}a^{p}+\frac{p-\bar{q}}{p\hat{\Gamma}^{\bar{q}/(p-\bar{q})}}b^{p/(p-\bar{q})},~~~ \forall a,b>0.
		\end{split}
	\end{equation*}
	So,
	\begin{equation*}
		\mathbb{E}\left(|\hat{\Upsilon}_{\Delta}(T)|^{\bar{q}}\mathbb{I}_{\{\varrho\leq T\}}\right)\leq\frac{\bar{q}\hat{\Gamma}}{p}\mathbb{E}|\hat{\Upsilon}_{\Delta}(T)|^{p}+\frac{p-\bar{q}}{p\hat{\Gamma}^{\bar{q}/(p-\bar{q})}}\mathbb{P}(\varrho\leq T).
	\end{equation*}
	Note that
	$\mathbb{E}|\hat{\Upsilon}_{\Delta}(T)|^{p}\leq C$
	and
$\mathbb{P}(\varrho\leq T)\leq \frac{C}{L^{p}}.$
	Then set $\hat{\Gamma}=\Delta^{\bar{q}}\alpha(\Delta)^{2\bar{q}}\vee\Delta^{\bar{q}(\gamma\wedge\sigma)}$, $L=(\Delta^{\bar{q}}\alpha(\Delta)^{2\bar{q}}\vee\Delta^{\bar{q}(\gamma\wedge\sigma)})^{\frac{-1}{p-\bar{q}}}$.
	Thus, we get that 
	\begin{equation*}
		\mathbb{E}|\hat{\Upsilon}_{\Delta}(T)|^{\bar{q}}\leq \mathbb{E}|\hat{\Upsilon}_{\Delta}(T\wedge\varrho)|^{\bar{q}}+C(\Delta^{\bar{q}}\alpha(\Delta)^{2\bar{q}}\vee\Delta^{\bar{q}(\gamma\wedge\sigma)}).
	\end{equation*}
	The condition (\ref{eqq10}) means that
	\begin{equation*}
		\Lambda^{-1}(\alpha(\Delta))\geq(\Delta^{\bar{q}}\alpha(\Delta)^{2\bar{q}}\vee\Delta^{\bar{q}(\gamma\wedge\sigma)})^{\frac{-1}{p-\bar{q}}}=L.
	\end{equation*}
	Due to Lemma \ref{le310}, we have
	\begin{equation*}
		\mathbb{E}|\hat{\Upsilon}_{\Delta}(T)|^{\bar{q}}\leq C(\Delta^{\bar{q}}\alpha(\Delta)^{2\bar{q}}\vee\Delta^{\bar{q}(\gamma\wedge\sigma)}).
	\end{equation*}
The desired result (\ref{lare1}) follows. Then combining Lemma \ref{le31} and (\ref{lare1}) gives (\ref{lare2}).
The proof is complete.
\end{proof}

\begin{rem}
	Similar to Remark \ref{remar2}, the $(1-\varepsilon)$-order convergence rate in $\mathcal{L}^{\bar{q}}(\bar{q}>2)$ sense can be given if some stronger conditions are added.
\end{rem}

\section{Numerical example}

In this section, a numerical example is shown to test our theory.
Consider nonautonomous and highly nonlinear  SDDE
\begin{equation}\label{exx1}
		\begin{split}
	d x(t)=&\left(\frac{1}{8}|x(t-\tau)|^{\frac{5}{4}}-5x^{3}(t)+2\zeta_{t}x(t)\right) d t\\
	&+\left(\frac{1}{2}|x(t)|^{\frac{3}{2}} +\zeta_{t}x(t-\tau)\right)d B(t), \quad t \in [0,1],
		\end{split}
\end{equation}
with the initial data $\xi$ which satisfies Assumption \ref{a35} with $\gamma=1$. Here, $\zeta_{t}=[t(1-t)]^{\frac{3}{4}}$ and $B(t)$ is a scalar Brownian motion.
It is easy to
verify  Assumption \ref{a31}, so we omit it. Furthermore, we get that
\begin{equation*}
	\begin{split}
		&( x-\bar{x})^T(f(t,x,y)-f(t,\bar{x},\bar{y}))
		\\\leq&3|x-\bar{x}|^{2}+5|x-\bar{x}|^{2}(-\frac{1}{2}(x^{2}+\bar{x}^{2}))+\frac{25}{256}|y-\bar{y}|^{2}(|y|^{\frac{1}{4}}+|\bar{y}|^{\frac{1}{4}})^{2}
		\\\leq&3|x-\bar{x}|^{2}+\frac{25}{64}|y-\bar{y}|^{2}-\frac{5}{2}|x-\bar{x}|^{2}(|x|^{2}+|\bar{x}|^{2})+\frac{25}{128}|y-\bar{y}|^{2}(|y|^{2}+|\bar{y}|^{2}).
	\end{split}
\end{equation*}
Let $q=2$. Similarly,
\begin{equation*}
	\begin{split}
		(q-1)|g(t, x,y)-g(t,\bar{x}, \bar{y})|^{2}\leq&\frac{1}{2}||x|^{\frac{3}{2}}-|\bar{x}|^{\frac{3}{2}}|^{2}+2|y-\bar{y}|^{2}
		\\\leq&\frac{9}{2}|x-\bar{x}|^{2}+2|y-\bar{y}|^{2}+\frac{9}{4}|x-\bar{x}|^{2}(|x|^{2}+|\bar{x}|^{2}).
	\end{split}
\end{equation*}
Combining the above two inequalities together yields that
\begin{equation*}
	\begin{split}
		( x-&\bar{x})^T(f(t,x,y)-f(t,\bar{x},\bar{y}))+(q-1)|g(t, x,y)-g(t,\bar{x}, \bar{y})|^{2}
		\\\leq&8|x-\bar{x}|^{2}-\frac{1}{4}|x-\bar{x}|^{2}(|x|^{2}+|\bar{x}|^{2})+8|y-\bar{y}|^{2}+\frac{1}{4}|y-\bar{y}|^{2}(|y|^{2}+|\bar{y}|^{2}).
	\end{split}
\end{equation*}
This means that Assumption \ref{a32} holds with $U(m,n)=\frac{1}{4}|m-n|^{2}(|m|^{2}+|n|^{2})$.
Then for $p>2$, Assumption \ref{a33} can be verified simply.
Additionally, it is easy to show that Assumption \ref{a34} and Assumption \ref{a36} are satisfied with $\beta=2$, $\sigma=\frac{3}{4}$. Then, choose $\Lambda(w)=5w^{3}$ for $w\geq1$ and $\alpha(\Delta)=\Delta^{-\frac{1}{8}}$. By Theorem \ref{theo37}, one can see that
\begin{equation*}
	\mathbb{E}|x(T)-\eta_{\Delta}(T)|^{2}\leq C\Delta^{\frac{3}{2}}.
\end{equation*}

\begin{figure}[htbp]
	\centering
	\includegraphics[height=6.8cm,width=8cm]{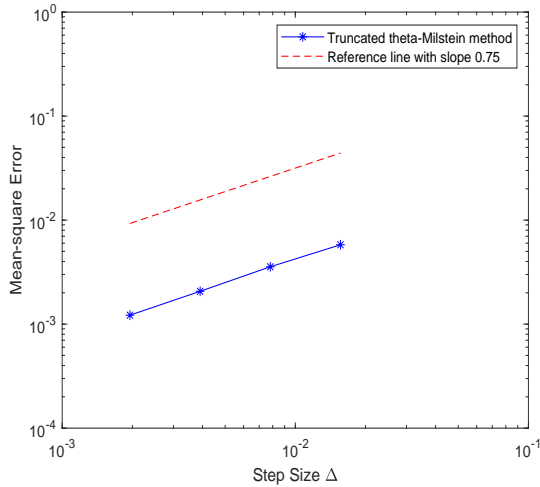}
	\caption{\label{4} The convergence rate of the truncated $\theta$-Milstein method for (\ref{exx1}).}
\end{figure}

Note that we cannot give the explicit form of the true solution. In the numerical experiment, we view the truncated $\theta$-Milstein method with the smallest step size ($2^{-11}$) as the true solution. Two thousand sample paths are simulated. Figure 1 gives the $\mathcal{L}^{2}$-errors defined by
\begin{equation*}
	(\mathbb{E}|x(T)-\eta_{\Delta}(T)|^{2})^{\frac{1}{2}}\approx \left(\frac{1}{2000}\sum_{i=1}^{2000}|x^{i}(T)-\eta^{i}_{\Delta}(T)|^{2}\right)^{\frac{1}{2}},
\end{equation*}
with step sizes $2^{-9}$, $2^{-8}$, $2^{-7}$, $2^{-6}$ at $T=1$.
 From Figure 1, we can observe that the convergence rate of the truncated $\theta$-Milstein method for (\ref{exx1}) is approximately 0.75, which implies that the theoretical results are reliable.

\begin{rem}
	When delay is vanishing (i.e., $\tau =0$), SDDE (\ref{exx1}) will degenerate into nonautonomous SDE. The research about the explicit TMM (i.e., $\theta =0$) for nonautonomous SDEs can be found in \cite{23}.
\end{rem}

\section{Conclusion and future research}

In this work,  we establish the truncated $\theta$-Milstein method for a class of nonautonomous and highly nonlinear SDDEs   which have practical applications in many fields.
The convergence rate is investigated in $\mathcal{L}^{\bar{q}}(\bar{q}\geq2)$ sense under the weaker conditions than the existing results. An example and its numerical simulation are presented to show the effectiveness of the truncated $\theta$-Milstein method.

The future work is to analyze the long-time asymptotic behavior of the truncated $\theta$-Milstein method for SDDE (\ref{sdde1}). 
Additionally, investigating the stability
of the truncated $\theta$-Milstein method for SDDE (\ref{sdde1}) by adjusting the parameter $\theta$ is meaningful to us.

\section*{Acknowledgements}

This work is supported by the National Natural Science Foundation of China (Grant Nos. 61876192 and 11871343) and Science and Technology Innovation Plan of Shanghai, China (Grant No. 20JC1414200).

\end{document}